\documentclass[11pt]{article}
\usepackage[utf8]{inputenc}
\usepackage{tikz}
\usepackage{float}
\usepackage{amsmath, amssymb, amsthm}
\usepackage[margin=1in]{geometry}
\usetikzlibrary{calc}
\usepackage[backend=bibtex, style=numeric]{biblatex}
\usepackage[mathlines]{lineno}
\usepackage{authblk}
\usepackage{hyperref}

\newtheorem{definition}{Definition}
\newtheorem{assumption}{Assumption}
\newtheorem{lemma}{Lemma}

\newtheorem{theorem}{Theorem}

\title{A Coupled Conforming--Nonconforming Galerkin Method for Poisson's Equation on Curved Domains}

\author[1]{Qingguang Guan
\thanks{First author. Email:\href{mailto:qingguang.guan@usm.edu}{qingguang.guan@usm.edu}} 
}
\author[2]{Wenju Zhao 
	\thanks{Corresponding author. Email:\href{mailto:zhaowj@sdu.edu.cn}{zhaowj@sdu.edu.cn}.
	Wenju Zhao was partially supported by National Key R\& D Program of China (No. 2023YFA10089033), Natural Science Foundation of Shandong Province (No. ZR2023ZD38), National Natural Science Foundation of China (No.12131014).
	} 
	}

\affil[1]{School of Mathematics and Natural Sciences, University of Southern Mississippi, Hattiesburg, MS 39406, USA}
\affil[2]{School of Mathematics, Shandong University, Jinan, Shandong 250100, China}

\date{}

\begin{document}
	
	\maketitle
	
	\begingroup
	\renewcommand{\thefootnote}{}
	\footnotetext{Submitted to {\em Numerical Methods for Partial Differential Equations}: March 8, 2026. Accepted: June 18, 2026.}
	\endgroup
	
	
\begin{abstract}
A coupled conforming--nonconforming Galerkin method is proposed for Poisson's equation on two-dimensional curved domains. The method applies a weak Galerkin discretization only on a thin boundary layer of curvilinear elements near the curved boundary, while using a standard continuous Galerkin discretization in the polygonal interior. In this way, geometric flexibility is retained where it is needed most, and the number of nonconforming degrees of freedom is significantly reduced.
A key ingredient is a mixed interpolation--projection operator on curvilinear weak Galerkin elements, combining \(L^2\) edge projections with conforming nodal interpolation on the interface side to ensure compatibility with the continuous Galerkin trace. Based on this construction, we prove an optimal a priori error estimate of order \(O(h^k)\) in the energy norm  {under the basic geometric assumptions of the method, and an optimal \(L^2(\Omega)\) estimate of order \(O(h^{k+1})\) under the additional elliptic dual regularity assumption}. Numerical experiments confirm the theoretical rates and demonstrate substantial savings in degrees of freedom compared with a fully weak Galerkin discretization.
\end{abstract}

\noindent\textbf{Keywords:} Weak Galerkin method; continuous Galerkin method; curved domains; curvilinear elements; conforming--nonconforming coupling; a priori error estimates.

\medskip
\noindent\textbf{MSC2020:} 65N30, 65N12, 35J25.
	
	\section{Introduction}\label{sec:intro}
	
	Accurate numerical simulation of partial differential equations on \emph{curved domains} requires controlling
	two coupled error mechanisms: the approximation of the solution space and the approximation of the geometry.
	Even for the Poisson problem
	\[
	-\Delta u=f\quad\text{in }\Omega,\qquad u=0\quad\text{on }\partial\Omega,
	\]
	a naive polygonal approximation of a smooth boundary can degrade convergence, especially for higher-order
	methods, unless geometry errors are treated carefully.
	
	\medskip
	\noindent 
	A classical remedy is to employ \emph{boundary-fitted curved elements}, such as isoparametric finite elements. However, maintaining the optimal convergence rate of these methods is contingent upon the strict invertibility of the Jacobian matrix associated with the diffeomorphic mapping from the reference element to the physical element. Furthermore, optimal error bounds require bounds on the higher-order derivatives of this mapping, which inherently rely on sufficient smoothness of the curved boundary. Constructing such valid mappings is a highly non-trivial task, see \cite{lenoir1986optimal,zlamal1973curved,ciarlet2002finite}.
	An alternative is to decouple the mesh from the boundary through \emph{unfitted/embedded} techniques,
	including fictitious domain, immersed/cut finite element methods, which enforce boundary conditions weakly and add stabilization on cut elements, see \cite{de2023stability,he2023error,burman2025cut}. Such methods avoid curved meshes and are good at dealing with moving interfaces/boundaries, but they are considerably more difficult to implement and use robustly in 3D than in 2D. In 3D a cut cell becomes an arbitrary polyhedron (possibly with dozens of faces). Accurate volume integration over these shapes requires sophisticated algorithms. 
	A third successful direction is the development of \emph{polygonal} discretizations that
	naturally accommodate complex boundaries and interfaces, such as discontinuous Galerkin (DG) \cite{cangiani2014hp, cangiani2016hp, antonietti2016review, bertoluzza2021polygonal, bertoluzza2021p}, hybridizable-DG
	(HDG) \cite{cockburn2014solving, solano2019high,gurkan2019extended,kirby2012cg}, virtual element methods (VEM) \cite{da2024c,beirao2020polynomial,da2019virtual}, weak Galerkin (WG) methods \cite{guan2020weak,guan2018weak,wang2014weak,mu2015weak} and many others.

	\medskip
	\noindent 
	Nonconforming methods have proved particularly attractive for complex geometries because their
	degrees of freedom are naturally associated with element boundaries and can be adapted to curved edges/faces.
	For instance, in the WG framework, recent work \cite{Guan2023} has introduced high-order WG formulations on \emph{curvilinear polytopal}
	meshes with Lipschitz continuous curved edges/faces, enabling convergence rates that are independent of the
	geometry approximation under suitable regularity assumptions.

	\medskip
	\noindent 
	While DG/HDG/nonconforming-VEM/WG methods offer strong geometric flexibility, they also introduce extra interface unknowns
	(traces, numerical fluxes, or boundary variables) throughout the domain. In many applications, however, the
	\emph{primary difficulty is the curved boundary}, whereas the interior of the domain is geometrically simple
	and does not benefit from a fully nonconforming treatment. This observation motivates a \emph{coupled}
	conforming--nonconforming strategy: use a geometry-robust nonconforming method only in a thin layer adjacent
	to $\partial\Omega$, and retain a standard conforming method in the interior. Due to its simplicity and flexibility, which facilitates analysis, we select the Weak Galerkin method as the nonconforming component and the Continuous Galerkin (CG) method as the conforming component to develop a coupled conforming-nonconforming Galerkin method. Other nonconforming methods remain valid candidates.
	This philosophy is consistent with a long line of \emph{hybrid couplings} in the Galerkin literature.
	Coupled continuous/discontinuous Galerkin schemes were introduced to deploy discontinuous formulations only
	where additional flexibility is required (e.g., nonsmooth solution features), while preserving
	the efficiency of continuous elements elsewhere, see \cite{DawsonProft2002}. In the WG setting, coupled WG--CG formulations have been
	studied for \emph{physical} interface problems, see \cite{Zhai20} for Stokes--Darcy, where different discretizations are
	naturally used in different subregions and are coupled across an interface. 
	The present work differs in that the interface is not fixed; rather, it separates a curved boundary layer—which thins as the mesh is refined—from a polygonal interior. Additionally, our method solves a single-valued PDE on a single domain, necessitating fundamentally different techniques and error analysis.
		
	\medskip
	\noindent 
	We propose and analyze a coupled conforming--nonconforming Galerkin method for Poisson's equation on
	two-dimensional $C^{0,1}$ domains (Lipschitz-continuous boundary). The method employs:
	\begin{itemize}
		\item a \textbf{boundary-layer weak Galerkin discretization} on curvilinear elements whose outer side lies on
		$\partial\Omega$, designed to treat curved boundaries directly;
		\item a \textbf{standard continuous Galerkin discretization} on a triangulation of the polygonal interior, reducing degrees of freedom significantly;
		\item a \textbf{strong coupling} across the artificial interface $\Gamma$ between the two regions, so that
		the global trial space remains conforming across $\Gamma$ while allowing nonconforming traces on the outer
		and outer--inner sides of boundary-layer elements;
		\item a new \textbf{mixed interpolation--projection operator} on curvilinear WG elements, using $L^2$ edge
		projections on the curved/outer sides and conforming nodal interpolation on the inner side to match the CG
		trace on $\Gamma$.
	\end{itemize}
	On shape-regular curvilinear elements (star-shaped with a uniform chunkiness parameter), we establish
	stability and approximation estimates for the mixed interpolation--projection operator, and derive optimal
	order error estimates: $\mathcal O(h^k)$ in the natural energy norm  {under the geometric assumptions stated below, and $\mathcal O(h^{k+1})$ in $L^2(\Omega)$ under the additional dual elliptic regularity assumption introduced in Section~\ref{subsec:L2}}. From a practical standpoint, the coupled scheme reduces the global number of degrees of freedom compared to a nonconforming discretization while preserving optimal convergence rate, since the nonconforming unknowns are restricted to the boundary layer.
	
	\medskip
	\noindent 
	The paper is organized as follows. Section~\ref{sec:preliminaries} collects geometric assumptions and auxiliary approximation tools on
	curvilinear elements and introduces the interpolation--projection operator.
	Section~\ref{sec:WG-CG scheme} presents the coupled WG--CG scheme and proves well-posedness.
	Section~\ref{sec:error estimates} establishes energy and $L^2$ error estimates.
	Section~\ref{sec:numerics} reports numerical experiments on the unit disk that confirm the theoretical
	convergence rates and illustrate the reduction in degrees of freedom.
	
	\section{Preliminaries and Auxiliary Lemmas}
	\label{sec:preliminaries}
	
	While this paper focuses exclusively on two-dimensional domains with curved boundaries, the analysis and methodology can be readily extended to three dimensions. In this section, we collect the necessary geometric definitions, trace inequalities, and polynomial approximation results required to analyze the interpolation-projection weak Galerkin operator on curvilinear elements.

	\subsection{Domain Properties and Shape Regularity}
	Let $\Omega \subset \mathbb{R}^2$ be a bounded domain with a $C^{0,1}$ curved boundary.  {This Lipschitz regularity is the basic geometric assumption used to define the boundary layer and invoke the local trace and approximation results on curvilinear elements.} The mesh is constructed in two steps: (1) create a ``boundary layer,'' as shown in the left panel of Figure~\ref{fig:mesh-element}; every element in this layer can be represented by the enlarged element as in the right panel; (2) after the boundary layer is created, the remaining interior region is a polygon, which is then triangulated. 
		
	\medskip
	\noindent 
	Let $D$ denote a curvilinear element in the boundary layer, whose curved side is a subset of $\partial \Omega$. This element $D$ has four sides: one outer side, two outer--inner sides, and one inner side. On $D$, we use the $L^2$ 
	projection from $L^2(e)$ onto $\mathbb{P}_k|_{e}$ (the trace sense as in \cite{Guan2023}) when $e$ is an outer side or an outer--inner side, and we use the conforming interpolation onto $\mathbb{P}_k(e)$ 
	when $e$ is the inner side. Let $T$ denote a triangular element in the interior.
	With this design, we apply the weak Galerkin method only in the boundary layer and the CG  method in the interior. This hybrid approach combines the advantages of both methods: it provides a better treatment of the curved boundary while reducing the number of unknowns in the interior region.
	\begin{figure}[h!]
		\begin{center}
			\begin{tikzpicture}
				
				\begin{scope}[scale=1.0]
					\def\R{3}         
					\def\factor{0.8}  
					\def\r{\factor*\R} 
					
					\draw[thick] (0,0) circle (\R);
					
					\draw[thick] (110:\r) 
					\foreach \a in {130, 150, 170, 190, 210, 230, 250} {
						-- (\a:\r)
					};
					
					\foreach \a in {110, 130, 150, 170, 190, 210, 230, 250} {
						\draw[thick] (\a:\r) -- (\a:\R);
					}
					
					\draw[thick, loosely dotted] (250:\r) arc (250:470:\r);
				\end{scope}
				
				\begin{scope}[shift={(7, -2)}, scale=1.3]
					\coordinate (A) at (0, 0);
					\coordinate (B) at (2.5, 0);
					\coordinate (C) at (4, -0.6);
					\coordinate (A_ext) at (-1.2, -0.6);
					\coordinate (TopLeft) at (-0.5, 2.8);
					\coordinate (TopRight) at (2.8, 2.3);
					
					\draw[thick,blue] (A) -- (B) node[midway, below=2pt] {inner side};
					\draw[thick,blue] (A_ext) -- (A);
					\draw[thick,blue] (B) -- (C);
					\draw[thick] (A) -- (TopLeft) node[midway, left=2pt, align=center] {outer-inner\\[-0.5ex]side};
					\draw[thick] (B) -- (TopRight) node[midway, right=2pt, align=center] {outer-inner\\[-0.5ex]side};
					
					\draw[thick] (TopLeft) .. controls (1, 3.5) and (2, 3.2) .. (TopRight) 
					node[midway, above right=1pt] {outer side};
					
					\filldraw (A) circle (1.5pt) node[below=2pt] {A};
					\filldraw (B) circle (1.5pt) node[above left=1pt] {B};
					\filldraw (C) circle (1.5pt) node[right=2pt] {C};
					\filldraw (A_ext) circle (1.5pt);
				\end{scope}
				
			\end{tikzpicture}
		\end{center}
		\caption{{\bf Left:} the ``boundary layer'' of the mesh for a two-dimensional domain $\Omega$. 
			{\bf Right:} an enlarged element from the boundary layer, it doesn't need to be convex. At point $B$, there is only one unknown. The numerical 
			solution $u_h$ on the interface $\overline{AB}$-$\overline{BC}$ is continuous at point $B$, but $u_h$ on the outer-inner side is not continuous at B. Continuity of $u_h$ follows the same pattern at other end points of the inner sides.}
		\label{fig:mesh-element}
	\end{figure}
	
	\noindent Following Guan, et al.\cite{Guan2023}, we require the element $D$ to satisfy the following shape-regularity assumptions:
	\begin{assumption}[Shape Regularity \cite{Guan2023}] \label{assum:shape_reg}
		The element $D$ is shape-regular if it satisfies:
		\begin{itemize}
			\item[\textbf{(A1)}] $D$ is a curvilinear polygon with a $C^{0,1}$ curved side and  diameter $h_D$.
			\item[\textbf{(A2)}] $D$ is star-shaped with respect to a disc $\mathfrak{B}_D \subset D$ with radius $\rho_D h_D$, where $0 < \rho_D < 1/2$;  {here $\rho_D$ is the dimensionless relative radius parameter.}
			\item[\textbf{(A3)}] The parameter $\rho_D$ has a uniform lower bound $0 < \rho_{\min} < \rho_D$.
		\end{itemize}
	\end{assumption}
	\noindent Assumption (A2) is particularly crucial as it mathematically permits the application of the Averaged Taylor Polynomial theory over the curved element $D$. Let $A\lesssim B$ denote $A\le \text{(constant)} B$. 
	
	\subsection{The Weak Galerkin and the Interpolation-Projection Operator}
	Let $V(D)$ be the space of discontinuous functions defined on $D$ and its boundary $\partial D$:
	\begin{equation}
		V(D) = \{ v = (v_0, v_b) : v_0 \in L^2(D), v_b \in L^2(\partial D) \}.
	\end{equation}
	For any $v \in V(D)$, the weak gradient $\nabla_w v \in [\mathbb{P}_{k-1}(D)]^2$ is defined as the unique polynomial satisfying:
	\begin{equation} \label{eq:weak_gradient}
		(\nabla_w v, \vec{q})_D := -(v_0, \nabla \cdot \vec{q})_D + \langle v_b, \vec{q} \cdot \vec{n} \rangle_{\partial D}, \quad \forall \vec{q} \in [\mathbb{P}_{k-1}(D)]^2.
	\end{equation}
	In our scheme, the boundary $\partial D$ is partitioned into two disjoint sets: $\partial D_{proj}$ (the outer and outer-inner sides) and $\partial D_{int}$ (the inner side). We define the interpolation-projection operator $Q_h \xi|_D = (Q_{k,D}^0 \xi, Q_{b,D}^\partial \xi)$ for $\xi \in H^{k+1}(D)$ as follows:
	\begin{definition}[Interpolation-Projection Operator $Q_h$]
	 Let $\partial D_{proj}$ denote the union of the outer-side and outer-inner sides of $D$, and $\partial D_{int}$ denote the inner-side of $D$. We define the projection operator $Q_h \xi|_D = (Q_{k,D}^0 \xi, Q_{b,D}^\partial \xi)$, where $Q_{k,D}^0$ is the $L^2$ projection from $L^2(D)$ to $\mathbb{P}_k(D)$, and on each side $e \subset \partial D$:
	\begin{equation}
	Q_{b,D}^\partial \xi |_e =
	\begin{cases}
		Q_{k,e}^\partial \xi, & \text{if } e \in \partial D_{proj} \text{ ($L^2$ projection to } \mathbb{P}_k|_e\text{)}, \\
		I_{k,e}^\partial \xi, & \text{if } e \in \partial D_{int} \text{ (conforming nodal interpolation to } \mathbb{P}_k(e)\text{)}.
	\end{cases}
	\end{equation}
	\end{definition}
	\noindent
	 {Throughout the paper, boundary traces are understood on the closure $\overline D$. In particular, if
	$w$ is continuous on $\overline D$, then
	$\|w\|_{L^\infty(\overline D)}=\|w\|_{L^\infty(D)}$.
	Thus, for functions continuous on $\overline D$, the norms
	$\|\cdot\|_{L^\infty(D)}$ and $\|\cdot\|_{L^\infty(\overline D)}$ are used interchangeably.}

	\noindent To bound boundary terms by volume terms, we rely on the following standard inequalities established for shape-regular curvilinear elements.
	\begin{lemma}[Trace Inequality {\cite[Lemma 2.2]{Guan2023}}] \label{lem:trace}
		If $D$ is shape-regular, then for any side $e \subset \partial D$ and any $v \in H^1(D)$:
		\begin{equation}
			\|v\|_{L^2(e)}^2 \lesssim h_D^{-1} \|v\|_{L^2(D)}^2 + h_D \|\nabla v\|_{L^2(D)}^2.
		\end{equation}
	\end{lemma}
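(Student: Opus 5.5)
The plan is to prove the trace inequality by the standard divergence-theorem argument, using the star-shapedness in (A2) to build a suitable vector field. By density of $C^1(\overline D)$ functions in $H^1(D)$ for Lipschitz $D$, together with continuity of the trace operator, it suffices to treat smooth $v$. Let $x_0$ be the center of the disc $\mathfrak{B}_D$, and consider the field $\vec m(x)=x-x_0$. Then $\nabla\cdot\vec m = 2$ and $|\vec m|\le h_D$ on $D$. Applying the divergence theorem to $v^2\vec m$ gives
\[
\int_{\partial D} v^2\,\vec m\cdot\vec n\,ds=\int_D \big(2v^2+2v\,\nabla v\cdot\vec m\big)\,dx \le 2\|v\|_{L^2(D)}^2+2h_D\|v\|_{L^2(D)}\|\nabla v\|_{L^2(D)} .
\]

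The key geometric step, and the expected main obstacle, is a uniform lower bound for $\vec m\cdot\vec n$ on $\partial D$. Since $D$ is star-shaped with respect to every point of the ball $B(x_0,\rho_Dh_D)$, one needs $(x-y)\cdot\vec n(x)\ge 0$ for all $y$ in that ball and almost every $x\in\partial D$. Taking $y=x_0+\rho_Dh_D\,\vec n(x)$ then gives $(x-x_0)\cdot\vec n(x)\ge\rho_Dh_D$. Justifying this for a merely Lipschitz curved side requires care. The normal exists almost everywhere, and at such a point the tangent-cone description of star-shapedness yields the half-space inclusion. I would argue this locally: if $(x-y)\cdot\vec n<0$, then the segment from $y$ to $x$ would approach $x$ from outside $D$, which contradicts star-shapedness. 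On the straight sides of $D$ the same argument is trivial.

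Using $\vec m\cdot\vec n\ge\rho_Dh_D\ge\rho_{\min}h_D$ from (A3), and restricting to a single side $e\subset\partial D$ (the integrand is nonnegative on every side), we get
\[
\rho_{\min}h_D\|v\|_{L^2(e)}^2\le 2\|v\|_{L^2(D)}^2+2h_D\|v\|_{L^2(D)}\|\nabla v\|_{L^2(D)} .
\]
Dividing by $\rho_{\min}h_D$ and applying Young's inequality $2ab\le a^2+b^2$ with $a=h_D^{-1/2}\|v\|_{L^2(D)}$ and $b=h_D^{1/2}\|\nabla v\|_{L^2(D)}$ gives the claim. The hidden constant depends only on $\rho_{\min}$.
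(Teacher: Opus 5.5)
Your proof is correct. It necessarily takes a different route from the paper, because the paper gives no argument for this lemma: it imports the estimate from \cite[Lemma 2.2]{Guan2023}, under the same Assumption~\ref{assum:shape_reg} that it copies from that reference.

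You instead give a self-contained derivation built from two pieces. The first is the Rellich-type identity $\int_{\partial D} v^2 (x-x_0)\cdot\vec n\,ds=\int_D\bigl(2v^2+2v\nabla v\cdot(x-x_0)\bigr)\,dx$. The second is the geometric bound $(x-x_0)\cdot\vec n(x)\ge\rho_D h_D$ almost everywhere on $\partial D$. Your justification of that bound is sound. The segment $[y,x]$ lies in $\overline D$, by approximating $x$ from inside $D$. On the other hand, at a point of differentiability of the local Lipschitz graph, any direction $d$ with $d\cdot\vec n>0$ leaves $\overline D$ for small $t>0$.

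What your route buys over the citation is transparency:
\begin{itemize}
\item it uses only the diameter and the disc radius;
\item it needs no convexity, no sub-triangulation, and no comparability of $|e|$ with $h_D$;
\item it yields an explicit bound for the whole boundary at once, $\|v\|_{L^2(\partial D)}^2\le\rho_{\min}^{-1}\bigl(3h_D^{-1}\|v\|_{L^2(D)}^2+h_D\|\nabla v\|_{L^2(D)}^2\bigr)$, which is also what the paper implicitly uses when it sums edge bounds over $\partial D$;
\item it confirms that the hidden constant depends only on the shape-regularity parameter.
\end{itemize}
The citation is shorter and offloads the measure-theoretic details (density, traces, and the divergence theorem on Lipschitz domains) to the reference.

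One point you should state explicitly. Assumption (A1) only says the curved side is Lipschitz. You need $D$ itself to be a Lipschitz domain, with no cusps where the curved side meets the outer--inner sides, for three things: the a.e.\ normal, the density of $C^1(\overline D)$ in $H^1(D)$, and the divergence theorem. This follows from (A2). A bounded domain star-shaped with respect to a ball has its boundary described, in polar coordinates about $x_0$, by a Lipschitz radial function whose Lipschitz constant is controlled by $1/\rho_D$.
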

	
	\begin{lemma}[Discrete Inverse Trace Inequality {\cite[Lemma 2.4]{Guan2023}}] \label{lem:inverse_trace}
		If $D$ is shape-regular, then for any polynomial vector field $\vec{q} \in [\mathbb{P}_{k}(D)]^2$:
		\begin{equation}
			\|\vec{q}\|_{L^2(\partial D)} \lesssim h_D^{-1/2} \|\vec{q}\|_{L^2(D)}.
		\end{equation}
	\end{lemma}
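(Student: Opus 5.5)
We prove the bound \(\|\vec q\|_{L^2(\partial D)}\lesssim h_D^{-1/2}\|\vec q\|_{L^2(D)}\) for polynomials by combining the trace inequality of Lemma~\ref{lem:trace} with a polynomial inverse inequality on \(D\), with constants depending only on \(k\) and \(\rho_{\min}\). It suffices to treat each scalar component \(q\in\mathbb{P}_k(D)\) separately and sum, since \(\|\vec q\|^2\) splits over the two components. The boundary \(\partial D\) consists of exactly four sides, so summing the side-wise bound over them costs only a fixed factor.

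Applying Lemma~\ref{lem:trace} to \(q\in\mathbb{P}_k(D)\subset H^1(D)\) on each side \(e\) gives
\[
\|q\|_{L^2(e)}^2\lesssim h_D^{-1}\|q\|_{L^2(D)}^2+h_D\|\nabla q\|_{L^2(D)}^2 .
\]
The statement therefore reduces to the inverse estimate \(\|\nabla q\|_{L^2(D)}\lesssim h_D^{-1}\|q\|_{L^2(D)}\) for \(q\in\mathbb{P}_k\). Once that is in hand, the second term is \(\lesssim h_D^{-1}\|q\|_{L^2(D)}^2\), and the result follows.

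The inverse estimate is the main step, since \(D\) is curved and is not an affine image of a reference element. We use the disc \(\mathfrak B_D\) from (A2), of radius \(\rho_D h_D\).
\begin{itemize}
\item Since \(D\subset B(x_0,h_D)\) for any \(x_0\in\mathfrak B_D\), \(\|\nabla q\|_{L^2(D)}\le\|\nabla q\|_{L^2(B(x_0,h_D))}\).
\item On the disc \(B(x_0,h_D)\), the standard inverse estimate (by scaling to a unit disc) gives \(\|\nabla q\|_{L^2(B(x_0,h_D))}\lesssim h_D^{-1}\|q\|_{L^2(B(x_0,h_D))}\).
\item Norm equivalence on the finite-dimensional space \(\mathbb{P}_k\) between two concentric discs with radius ratio \(1/\rho_D\le 1/\rho_{\min}\) gives \(\|q\|_{L^2(B(x_0,h_D))}\lesssim \|q\|_{L^2(\mathfrak B_D)}\le\|q\|_{L^2(D)}\). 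To see this, scale both discs to unit size; the constant is uniform because it depends continuously on the ratio, which lies in a compact range by (A3).
\end{itemize}
Chaining these three bounds yields \(\|\nabla q\|_{L^2(D)}\lesssim h_D^{-1}\|q\|_{L^2(D)}\). Substituting this into the side-wise trace bound and summing over the four sides and both components completes the argument, with all constants depending only on \(k\) and \(\rho_{\min}\).
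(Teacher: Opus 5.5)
Your proof is correct. The paper gives no proof of its own and imports this result from \cite[Lemma 2.4]{Guan2023}; your route is the standard one on star-shaped curvilinear elements, namely Lemma~\ref{lem:trace} plus the inverse estimate $\|\nabla q\|_{L^2(D)}\lesssim h_D^{-1}\|q\|_{L^2(D)}$, obtained by sandwiching $D$ between $\mathfrak B_D$ and the disc of radius $h_D$ about its center. Two small points to tidy: take $x_0$ to be the center of $\mathfrak B_D$ rather than an arbitrary point of it, so the discs are actually concentric; and the uniformity of the norm-equivalence constant follows most simply from its monotonicity in $\rho_D$, giving a bound by its value at $\rho_{\min}$.
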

	
	\noindent To properly bound the approximation error over both $L^2$ projections and nodal interpolations simultaneously, we utilize the Averaged Taylor Polynomial (see Definition 4.1.3 in \cite{BrennerScott2008}). 
	
	\begin{lemma}[Averaged Taylor Polynomial Bounds \cite{BrennerScott2008}] \label{lem:taylor_bounds}
		Suppose $D \subset \mathbb{R}^2$ is star-shaped with respect to a disc (satisfying Assumption \ref{assum:shape_reg}). Let $k \ge 1$. For any $\xi \in H^{k+1}(D)$, there exists a polynomial $p \in \mathbb{P}_k(D)$ (the Averaged Taylor Polynomial of degree $k$) such that the following approximation bounds hold simultaneously:
		\begin{align}
			\|\xi - p\|_{L^2(D)} &\lesssim h_D^{k+1} |\xi|_{H^{k+1}(D)}, \label{eq:taylor_L2} \\
			\|\nabla(\xi - p)\|_{L^2(D)} &\lesssim h_D^k |\xi|_{H^{k+1}(D)}, \label{eq:taylor_H1} \\
			\|\xi - p\|_{L^\infty(D)} &\lesssim h_D^k |\xi|_{H^{k+1}(D)}. \label{eq:taylor_Linf}
		\end{align}
	\end{lemma}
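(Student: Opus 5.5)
The plan is to build the averaged Taylor polynomial $p=T^k\xi$ over the disc $\mathfrak{B}_D$ given by (A2), and to prove the three bounds through the Bramble--Hilbert argument of \cite{BrennerScott2008}, keeping track of how the constants depend on the geometry. Since $D$ is star-shaped with respect to a disc of radius $\rho_D h_D$, the Sobolev integral representation of the remainder (Brenner--Scott, Prop.~4.2.8) holds on $D$ itself. Specifically,
\[
\xi - p = R^{k+1}\xi,\qquad R^{k+1}\xi(x)=\sum_{|\alpha|=k+1}\int_{C_x} k_\alpha(x,y)\,D^\alpha \xi(y)\,dy ,
\]
with kernel bound $|k_\alpha(x,y)|\lesssim (1+1/\rho_D)^{2}\,|x-y|^{k+1-2}$. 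Here $C_x$ is the convex hull of $\{x\}\cup\mathfrak B_D$, and it is contained in $D$ by the star-shapedness. The constant depends only on $k$ and on the chunkiness parameter $1/\rho_D$, and (A3) makes it uniform.

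I would first prove \eqref{eq:taylor_L2} and \eqref{eq:taylor_Linf}. The kernel is weakly singular with exponent $k-1\ge 0$. Cauchy--Schwarz in $y$ therefore gives a pointwise bound
\[
|R^{k+1}\xi(x)|\lesssim \Big(\int_{D}|x-y|^{2(k-1)}dy\Big)^{1/2}|\xi|_{H^{k+1}(D)}\lesssim h_D^{k}|\xi|_{H^{k+1}(D)}.
\]
This bound holds for every $x\in D$ (and by continuity on $\overline D$, since $H^{k+1}\subset C^0$ in two dimensions). That is \eqref{eq:taylor_Linf}. Integrating its square over $D$, with $|D|\lesssim h_D^2$, gives \eqref{eq:taylor_L2}. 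If one wants to avoid the representation formula, a scaling argument also works: map to unit diameter, use that the reference constant depends only on the chunkiness parameter (Brenner--Scott Lemma~4.3.8), and scale back.

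For \eqref{eq:taylor_H1} I would use the commutation property $\partial_j T^k\xi = T^{k-1}\partial_j\xi$ (Brenner--Scott Prop.~4.1.17). This gives $\nabla(\xi-p)=R^{k}\nabla\xi$. The $L^2$ remainder estimate at order $k$ (Brenner--Scott Prop.~4.3.2, via Young's convolution inequality with a kernel $|x-y|^{k-2}$, which is integrable for $k\ge1$) then gives $\|R^k\nabla\xi\|_{L^2(D)}\lesssim h_D^{k}|\nabla\xi|_{H^{k}(D)}\le h_D^k|\xi|_{H^{k+1}(D)}$. The bounds hold simultaneously because the same $p$ is used in every step.

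The main obstacle is the geometry. The element $D$ is curvilinear and possibly nonconvex, so I must check that the representation formula and the kernel bounds use only star-shapedness with respect to $\mathfrak B_D$, not convexity or polygonality. The Lipschitz side enters only through $|D|\lesssim h_D^2$ and through $C_x\subset D$. I would handle this by verifying that every segment from $x$ to a point of $\mathfrak B_D$ lies in $D$, which is exactly (A2), and then tracking the $\rho_{\min}$ dependence through (A3).
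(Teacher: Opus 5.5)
Your proof is correct. It rests on the same Brenner--Scott averaged-Taylor-polynomial theory as the paper's, but it is more self-contained. The paper simply cites the Bramble--Hilbert lemma (Lemma 4.3.8 with $m=k+1$, $p=2$) for the $L^2$ and $H^1$ bounds and Proposition 4.3.2 for the $L^\infty$ bound. You instead re-derive everything from the Sobolev representation of the remainder.

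Your ordering also differs slightly. You get the $L^\infty$ bound first by Cauchy--Schwarz; the kernel bound $|x-y|^{k-1}$ is in fact bounded for $k\ge 1$, not merely weakly singular. You then get the $L^2$ bound for free from $|D|\le\pi h_D^2$, whereas the paper obtains it from Bramble--Hilbert. Your $H^1$ step uses the commutation $\partial_j T^k\xi=T^{k-1}\partial_j\xi$ plus Young's inequality with the integrable kernel $|z|^{k-2}$ on $\{|z|\le h_D\}$. That is essentially the mechanism inside Brenner--Scott's proof of Lemma 4.3.8.

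What your route buys is transparent constant tracking. All constants depend only on $k$ and on $1/\rho_D\le 1/\rho_{\min}$ for the specific disc $\mathfrak B_D$ of (A2). So you never need the hypothesis in Brenner--Scott's statements that the averaging ball have radius exceeding $\tfrac12\rho_{\max}$. Assumption (A2) does not literally provide this, so the paper's citation tacitly requires averaging over a possibly different ball. That is harmless, since the lemma only asserts the existence of $p$. The paper's version buys brevity.

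Two small corrections to your commentary:
\begin{itemize}
\item Proposition 4.3.2 of Brenner--Scott is the $L^\infty$ remainder bound, which is what the paper cites it for. It is not the $L^2$ estimate you attribute to it.
\item The Lipschitz curved side plays no role in this lemma. The bound $|D|\le\pi h_D^2$ holds for any set of diameter $h_D$, and $C_x\subset D$ follows from star-shapedness alone.
\end{itemize}
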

	\begin{proof}
		The polynomial $p$ is defined as the Averaged Taylor Polynomial (Definition 4.1.3 in \cite{BrennerScott2008}). The $L^2(D)$ and $H^1(D)$ volume bounds \eqref{eq:taylor_L2} and \eqref{eq:taylor_H1} follow directly from the Bramble-Hilbert Lemma (Lemma 4.3.8 in \cite{BrennerScott2008}) evaluated at $m=k+1$ and $p=2$. The $L^\infty(D)$ bound \eqref{eq:taylor_Linf} follows from Proposition 4.3.2 in \cite{BrennerScott2008} (utilizing the Sobolev embedding $H^{k+1}(D) \hookrightarrow L^\infty(D)$ since $k+1 - 2/2 = k \geq 1$).
	\end{proof}
	
	\noindent The stability property of the 1D nodal interpolation used on the inner side $e \in \partial D_{int}$ is shown in the following Lemma.
	\begin{lemma}[1D Nodal Interpolant Stability] \label{lem:interpolation_stability}
		Let $I_{k,e}^\partial: C^0(e) \to \mathbb{P}_k(e)$ be the standard nodal interpolation of degree $k$ on a 1D edge $e$. For any continuous function $v \in C^0(e)$, we have the $L^\infty$ stability bound:
		\begin{equation}
			\|I_{k,e}^\partial v\|_{L^\infty(e)} \le \Lambda_k \|v\|_{L^\infty(e)},
		\end{equation}
		where $\Lambda_k$ is the Lebesgue constant for the chosen nodal distribution on $\mathbb{P}_k(e)$. Furthermore, $I_{k,e}^\partial p = p$ for any polynomial $p \in \mathbb{P}_k(e)$.
	\end{lemma}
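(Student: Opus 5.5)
We prove the two claims of Lemma~\ref{lem:interpolation_stability} separately, writing the interpolant in the Lagrange basis. Let $x_0,\dots,x_k$ be the chosen distinct interpolation nodes on the edge $e$, including both endpoints so that the interpolant is conforming. Let $\ell_0,\dots,\ell_k \in \mathbb{P}_k(e)$ be the associated Lagrange basis, characterized by $\ell_i(x_j)=\delta_{ij}$. By definition,
\[
I_{k,e}^\partial v=\sum_{i=0}^k v(x_i)\,\ell_i .
\]
This is well defined for $v\in C^0(e)$, since point values make sense.

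\textbf{Stability.} Fix $x\in e$. The triangle inequality gives
\[
|I_{k,e}^\partial v(x)|\le \sum_{i=0}^k |v(x_i)|\,|\ell_i(x)|\le \|v\|_{L^\infty(e)}\sum_{i=0}^k|\ell_i(x)| .
\]
Taking the supremum over $x$, the bound holds with
\[
\Lambda_k:=\sup_{x\in e}\sum_{i=0}^k|\ell_i(x)| ,
\]
which is the Lebesgue constant of the node set. We still need $\Lambda_k$ to be independent of the edge length $h_e$. Map $e$ affinely onto the reference interval $[0,1]$ and place the nodes at fixed reference positions, for example equispaced or Gauss--Lobatto. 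Then $\ell_i=\hat\ell_i\circ F^{-1}$, where $F$ is the affine map and $\hat\ell_i$ is the reference Lagrange basis. Hence $\sum_i|\ell_i(x)|=\sum_i|\hat\ell_i(\hat x)|$ with $\hat x=F^{-1}(x)$. So $\Lambda_k$ depends only on $k$ and the reference node distribution, not on $e$. It is finite because it is the maximum of a continuous function on a compact interval.

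\textbf{Reproduction.} Let $p\in\mathbb{P}_k(e)$. Then $I_{k,e}^\partial p-p$ lies in $\mathbb{P}_k(e)$ and vanishes at the $k+1$ distinct nodes $x_i$. A nonzero polynomial of degree at most $k$ has at most $k$ zeros, so this difference is zero and $I_{k,e}^\partial p=p$.

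The only point that needs any care is that the constant is uniform over edges. This is guaranteed by fixing the node distribution on the reference interval and using the affine invariance of the Lagrange basis; the rest is elementary.
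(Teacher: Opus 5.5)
Your proof is correct and matches the paper's treatment. The paper states this lemma without proof, as a classical fact. Your Lagrange-basis bound with $\Lambda_k=\sup_{x\in e}\sum_i|\ell_i(x)|$, together with the unisolvence argument for reproduction, is exactly the standard justification it relies on. Your observation that $\Lambda_k$ is independent of $h_e$, by affine invariance, is the one point the paper uses implicitly: it absorbs $\Lambda_k$ into the hidden constant in the inner-side estimate of Lemma~\ref{lem:I-P-estimation}.
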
	

\begin{lemma}[Estimation of the Interpolation-Projection Operator] \label{lem:I-P-estimation}
	Let $Q_h$ be the operator defined above. Assuming $D$ is shape-regular and $k \ge 1$, for any $\xi \in H^{k+1}(D)$, we have
	\begin{align}
		&(\nabla_w Q_h \xi, \vec{q})_D = (\mathbb{Q}_{k-1,D} \nabla \xi, \vec{q})_D + \langle Q_{b,D}^\partial \xi - \xi, \vec{q} \cdot \vec{n} \rangle_{\partial D}, \quad \forall \vec{q} \in [\mathbb{P}_{k-1}(D)]^2, \label{eq:lem1}\\
		&|\langle Q_{b,D}^\partial \xi - \xi, \vec{q} \cdot \vec{n} \rangle_{\partial D}| \lesssim h_D^k \|\xi\|_{H^{k+1}(D)} \|\vec{q}\|_{L^2(D)}, \label{eq:lem2}\\
		&\|\nabla_w Q_h \xi - \mathbb{Q}_{k-1,D} \nabla \xi\|_{L^2(D)} \lesssim h_D^k \|\xi\|_{H^{k+1}(D)}, \label{eq:lem3}
	\end{align}
	where $\nabla_w$ is the weak gradient operator, $\mathbb{Q}_{k-1,D}$ is the $L^2$ projection onto $[\mathbb{P}_{k-1}(D)]^2$, and the hidden constants depend only on the shape regularity parameters and polynomial degree $k$.
\end{lemma}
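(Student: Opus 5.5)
The identity \eqref{eq:lem1} is purely algebraic, and the other two estimates follow by splitting the boundary term into its projection and interpolation parts.

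\textbf{Step 1: the identity \eqref{eq:lem1}.} Insert $v=Q_h\xi$ into the definition \eqref{eq:weak_gradient} of the weak gradient. Since $\nabla\cdot\vec q\in\mathbb{P}_{k-2}(D)\subset\mathbb{P}_k(D)$, the $L^2$ projection gives $(Q^0_{k,D}\xi,\nabla\cdot\vec q)_D=(\xi,\nabla\cdot\vec q)_D$. Integrating by parts on the Lipschitz element $D$ gives
\[
-(\xi,\nabla\cdot\vec q)_D=(\nabla\xi,\vec q)_D-\langle \xi,\vec q\cdot\vec n\rangle_{\partial D}.
\]
Also $(\nabla\xi,\vec q)_D=(\mathbb{Q}_{k-1,D}\nabla\xi,\vec q)_D$. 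Adding $\langle Q^\partial_{b,D}\xi,\vec q\cdot\vec n\rangle_{\partial D}$ yields \eqref{eq:lem1}.

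\textbf{Step 2: the boundary term \eqref{eq:lem2}.} Split $\partial D=\partial D_{proj}\cup\partial D_{int}$.

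On the inner side $e\in\partial D_{int}$, which is a straight segment, $\vec n$ is constant. Hence $\vec q\cdot\vec n|_e\in\mathbb{P}_{k-1}(e)$.

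On $\partial D_{proj}$ I would not exploit orthogonality, since $\vec q\cdot\vec n$ need not be a polynomial on the curved side. Instead I bound directly, side by side:
\[
|\langle Q^\partial_{k,e}\xi-\xi,\vec q\cdot\vec n\rangle_e|\le \|Q^\partial_{k,e}\xi-\xi\|_{L^2(e)}\|\vec q\|_{L^2(e)}.
\]
Let $p$ be the averaged Taylor polynomial of Lemma~\ref{lem:taylor_bounds}. Since $Q^\partial_{k,e}$ is an $L^2(e)$-orthogonal projection onto a space containing $p|_e$,
\[
\|Q^\partial_{k,e}\xi-\xi\|_{L^2(e)}\le\|\xi-p\|_{L^2(e)}.
\]
The trace inequality (Lemma~\ref{lem:trace}) together with \eqref{eq:taylor_L2}--\eqref{eq:taylor_H1} then gives
\[
\|\xi-p\|_{L^2(e)}^2\lesssim h_D^{-1}h_D^{2k+2}|\xi|_{H^{k+1}(D)}^2+h_D\,h_D^{2k}|\xi|_{H^{k+1}(D)}^2,
\]
that is, $\|\xi-p\|_{L^2(e)}\lesssim h_D^{k+1/2}|\xi|_{H^{k+1}(D)}$. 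Combined with the inverse trace inequality $\|\vec q\|_{L^2(\partial D)}\lesssim h_D^{-1/2}\|\vec q\|_{L^2(D)}$ (Lemma~\ref{lem:inverse_trace}), this gives the factor $h_D^k$.

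On the inner side, I write
\[
I^\partial_{k,e}\xi-\xi=I^\partial_{k,e}(\xi-p)-(\xi-p),
\]
using $I^\partial_{k,e}p=p$ from Lemma~\ref{lem:interpolation_stability}. By the Lebesgue-constant bound,
\[
\|I^\partial_{k,e}\xi-\xi\|_{L^\infty(e)}\le(1+\Lambda_k)\|\xi-p\|_{L^\infty(\overline D)}\lesssim h_D^k|\xi|_{H^{k+1}(D)},
\]
where the last step is \eqref{eq:taylor_Linf}. This uses that $\xi\in H^{k+1}(D)\subset C^0(\overline D)$, so traces coincide with pointwise values. Then
\[
\|I^\partial_{k,e}\xi-\xi\|_{L^2(e)}\le|e|^{1/2}\|I^\partial_{k,e}\xi-\xi\|_{L^\infty(e)}\lesssim h_D^{k+1/2}|\xi|_{H^{k+1}(D)},
\]
and I pair this with the inverse trace inequality exactly as before. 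Summing over the finitely many sides (four) proves \eqref{eq:lem2}.

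\textbf{Step 3: the estimate \eqref{eq:lem3}.} Take $\vec q=\nabla_wQ_h\xi-\mathbb{Q}_{k-1,D}\nabla\xi\in[\mathbb{P}_{k-1}(D)]^2$ in \eqref{eq:lem1} and apply \eqref{eq:lem2}. Dividing by $\|\vec q\|_{L^2(D)}$ gives \eqref{eq:lem3}.

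\textbf{Main obstacle.} The delicate step is the inner-side interpolation term. Trace plus inverse inequalities alone lose regularity for nodal interpolation, so one must use the $L^\infty$ route. This requires that the constant in \eqref{eq:taylor_Linf} is uniform over curvilinear elements; that uniformity follows from the star-shapedness with chunkiness bounded by $\rho_{\min}$ in (A2)--(A3), via the scaled Sobolev embedding in \cite{BrennerScott2008}.
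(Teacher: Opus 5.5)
Your proposal is correct and follows the paper's proof essentially step for step. It uses the same algebraic derivation of \eqref{eq:lem1}, the same averaged-Taylor-polynomial argument with the trace and inverse trace inequalities for \eqref{eq:lem2}, and the same test function $\vec q=\nabla_wQ_h\xi-\mathbb{Q}_{k-1,D}\nabla\xi$ for \eqref{eq:lem3}. The only cosmetic difference is on the inner side: you bound the whole error $I^\partial_{k,e}\xi-\xi$ in $L^\infty(e)$ with constant $1+\Lambda_k$, whereas the paper bounds $\|\xi-p\|_{L^2(e)}$ through the trace inequality and uses the $L^\infty$ route only for $I^\partial_{k,e}(\xi-p)$.
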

\begin{proof}
	\noindent \textbf{Part 1: Prove \eqref{eq:lem1}.} 
	Using the definition of the weak gradient \eqref{eq:weak_gradient}, applying integration by parts, and utilizing the properties of the $L^2$ projections $\mathbb{Q}_{k-1,D}$ and $Q_{k,D}^0$, we have:
	\begin{align*}
		(\nabla_w Q_h \xi, \vec{q})_D &= -(Q_{k,D}^0 \xi, \nabla \cdot \vec{q})_D + \langle Q_{b,D}^\partial \xi, \vec{q} \cdot \vec{n} \rangle_{\partial D} \\
		&= -(\xi, \nabla \cdot \vec{q})_D + \langle \xi, \vec{q} \cdot \vec{n} \rangle_{\partial D} + \langle Q_{b,D}^\partial \xi - \xi, \vec{q} \cdot \vec{n} \rangle_{\partial D} \\
		&= (\nabla \xi, \vec{q})_D + \langle Q_{b,D}^\partial \xi - \xi, \vec{q} \cdot \vec{n} \rangle_{\partial D} \\
		&= (\mathbb{Q}_{k-1,D} \nabla \xi, \vec{q})_D + \langle Q_{b,D}^\partial \xi - \xi, \vec{q} \cdot \vec{n} \rangle_{\partial D}.
	\end{align*}
	
	\noindent \textbf{Part 2: Averaged Taylor Polynomial Estimates.} 
	To prove \eqref{eq:lem2}, we first apply the Cauchy-Schwarz inequality and the discrete inverse trace inequality for polynomials (Lemma \ref{lem:inverse_trace}), which implies $\|\vec{q}\|_{L^2(\partial D)} \lesssim h_D^{-1/2} \|\vec{q}\|_{L^2(D)}$:
	\begin{align} \label{eq:boundary_bound}
		|\langle Q_{b,D}^\partial \xi - \xi, \vec{q} \cdot \vec{n} \rangle_{\partial D}| &\le \sum_{e \subset \partial D} \|Q_{b,D}^\partial \xi - \xi\|_{L^2(e)} \|\vec{q} \cdot \vec{n}\|_{L^2(e)} \nonumber \\
		&\lesssim h_D^{-1/2} \|\vec{q}\|_{L^2(D)} \sum_{e \subset \partial D} \|Q_{b,D}^\partial \xi - \xi\|_{L^2(e)}.
	\end{align}
	We must bound the boundary error $\|Q_{b,D}^\partial \xi - \xi\|_{L^2(e)}$. Let $p \in \mathbb{P}_k(D)$ be the \textbf{Averaged Taylor Polynomial} of degree $k$ of $\xi$ over $D$. Because $D$ satisfies the shape regularity conditions (Assumption \ref{assum:shape_reg}), Lemma \ref{lem:taylor_bounds} guarantees that this single polynomial $p$ satisfies optimal bounds simultaneously in multiple norms. 
	
	\noindent Specifically, Lemma \ref{lem:taylor_bounds} provides the $L^\infty(D)$ volume bound:
	\begin{equation}
		\|\xi - p\|_{L^\infty(D)} \lesssim h_D^k |\xi|_{H^{k+1}(D)} \le h_D^k \|\xi\|_{H^{k+1}(D)}, \label{eq:taylor2}
	\end{equation}
	as well as the standard $L^2(D)$ and $H^1(D)$ volume estimates:
	\begin{equation}
		\|\xi - p\|_{L^2(D)} \lesssim h_D^{k+1} \|\xi\|_{H^{k+1}(D)} \quad \text{and} \quad \|\nabla(\xi - p)\|_{L^2(D)} \lesssim h_D^k \|\xi\|_{H^{k+1}(D)}.
	\end{equation}
	Substituting these into the continuous trace inequality (Lemma \ref{lem:trace}) applied to the error $v = \xi - p$, we obtain the $L^2(e)$ boundary trace bound:
	\begin{align}
		\|\xi - p\|_{L^2(e)}^2 &\lesssim h_D^{-1} \|\xi - p\|_{L^2(D)}^2 + h_D \|\nabla(\xi - p)\|_{L^2(D)}^2 \nonumber \\
		&\lesssim h_D^{-1} \left(h_D^{k+1} \|\xi\|_{H^{k+1}(D)}\right)^2 + h_D \left(h_D^k \|\xi\|_{H^{k+1}(D)}\right)^2 \nonumber \\
		&\lesssim h_D^{2k+1} \|\xi\|_{H^{k+1}(D)}^2,
	\end{align}
	which yields:
	\begin{equation}
		\|\xi - p\|_{L^2(e)} \lesssim h_D^{k+1/2} \|\xi\|_{H^{k+1}(D)}. \label{eq:taylor1}
	\end{equation}
	
	\noindent \textbf{Part 3: Boundary Error Bounds.} 
	We split the sum over the boundary edges into two cases, utilizing the same polynomial $p$ for both to apply the triangle inequality.
	\medskip
	
	\noindent \textit{Case 1: Outer and outer-inner sides ($e \in \partial D_{proj}$).} \\
	Here, $Q_{b,D}^\partial$ is the $L^2(e)$ projection, which preserves polynomials of degree $k$ (i.e., $Q_{k,e}^\partial p = p$). By the $L^2$ projection's best-approximation property and \eqref{eq:taylor1}:
	\begin{equation}
		\|Q_{k,e}^\partial \xi - \xi\|_{L^2(e)} \le \|\xi - p\|_{L^2(e)} \lesssim h_D^{k+1/2} \|\xi\|_{H^{k+1}(D)}.
	\end{equation}
	
	\noindent \textit{Case 2: Inner side ($e \in \partial D_{int}$).} \\
	Here, $Q_{b,D}^\partial$ is the conforming nodal interpolation operator $I_{k,e}^\partial$, which also preserves polynomials of degree $k$ (i.e., $I_{k,e}^\partial p = p$ by Lemma \ref{lem:interpolation_stability}). By the triangle inequality:
	\begin{equation}\label{interp-on-e-1}
		\|I_{k,e}^\partial \xi - \xi\|_{L^2(e)} \le \|\xi - p\|_{L^2(e)} + \|I_{k,e}^\partial (\xi - p)\|_{L^2(e)}.
	\end{equation}
	The first term is bounded by $h_D^{k+1/2} \|\xi\|_{H^{k+1}(D)}$ via \eqref{eq:taylor1}. For the second term, we use the $L^\infty(e)$ stability of the 1D nodal interpolation (Lemma \ref{lem:interpolation_stability}), followed by the volume bound \eqref{eq:taylor2}:
	\begin{align}\label{interp-on-e-2}
		\|I_{k,e}^\partial (\xi - p)\|_{L^2(e)} &\lesssim h_e^{1/2} \|I_{k,e}^\partial (\xi - p)\|_{L^\infty(e)} \nonumber \\
		&\lesssim h_D^{1/2} \|\xi - p\|_{L^\infty(e)} \nonumber \\
		&\le  {h_D^{1/2} \|\xi - p\|_{L^\infty(D)}} \nonumber \\
		&\lesssim h_D^{1/2} \left( h_D^k \|\xi\|_{H^{k+1}(D)} \right) = h_D^{k+1/2} \|\xi\|_{H^{k+1}(D)}.
	\end{align}
	Summing these bounds gives $\|I_{k,e}^\partial \xi - \xi\|_{L^2(e)} \lesssim h_D^{k+1/2} \|\xi\|_{H^{k+1}(D)}$.
	
	\noindent \textbf{Part 4: Conclusion of the Proof.} 
	Combining the estimates from Case 1 and Case 2 back into \eqref{eq:boundary_bound}, we obtain:
	\begin{equation}
		|\langle Q_{b,D}^\partial \xi - \xi, \vec{q} \cdot \vec{n} \rangle_{\partial D}| \lesssim h_D^{-1/2} \|\vec{q}\|_{L^2(D)} \left( h_D^{k+1/2} \|\xi\|_{H^{k+1}(D)} \right) = h_D^k \|\xi\|_{H^{k+1}(D)} \|\vec{q}\|_{L^2(D)},
	\end{equation}
	which proves \eqref{eq:lem2}.
	
	\noindent Finally, to prove \eqref{eq:lem3}, let $\vec{q} = \nabla_w Q_h \xi - \mathbb{Q}_{k-1,D} \nabla \xi \in [\mathbb{P}_{k-1}(D)]^2$. Rearranging \eqref{eq:lem1}, we get:
	\begin{equation}
		\|\vec{q}\|_{L^2(D)}^2 = (\vec{q}, \vec{q})_D = \langle Q_{b,D}^\partial \xi - \xi, \vec{q} \cdot \vec{n} \rangle_{\partial D}.
	\end{equation}
	Applying the newly established bound \eqref{eq:lem2}, we have:
	\begin{equation}
		\|\vec{q}\|_{L^2(D)}^2 \lesssim h_D^k \|\xi\|_{H^{k+1}(D)} \|\vec{q}\|_{L^2(D)}.
	\end{equation}
	 {If $\|\vec{q}\|_{L^2(D)}=0$, then \eqref{eq:lem3} is immediate. Otherwise,} dividing both sides by
	$\|\vec{q}\|_{L^2(D)}$ yields \eqref{eq:lem3} and completes the proof.
\end{proof}

\section{The Coupled WG-CG Method}
\label{sec:WG-CG scheme}

\subsection{Domain Partition and Function Spaces}
Let $\Omega \subset \mathbb{R}^2$ be a bounded domain with a curved boundary $\partial \Omega$. The domain is partitioned into two disjoint subdomains: the boundary layer $\Omega_{wg}$ (a union of curvilinear elements $D$, see Figure \ref{fig:mesh-element}) and the interior polygonal region $\Omega_{cg}$ (a union of triangular elements $T$). We denote the mesh partitions as $\mathcal{T}_h^{wg}$ and $\mathcal{T}_h^{cg}$, respectively. Let $h:=\max_{K\in\mathcal T_h} h_K$, where $\mathcal{T}_h = \mathcal{T}_h^{wg}\cup \mathcal{T}_h^{cg}$, $K$ can be $D$ or $T$.  The interface between the two regions is the boundary of a polygon denoted by $\Gamma = \partial \Omega_{wg} \cap \partial \Omega_{cg}$.

\noindent For an integer $k \ge 1$, we define the continuous finite element space on the interior as:
\begin{equation}
	V_h^{cg} = \{ v_c \in C^0(\Omega_{cg}) : v_c|_T \in \mathbb{P}_k(T), \forall T \in \mathcal{T}_h^{cg} \}.
\end{equation}
On the boundary layer, we define the weak Galerkin space as:
\begin{equation}
	V_h^{wg} = \{ v_{w} = (v_0, v_b) \text{ on } \Omega_{wg} : v_0|_D \in \mathbb{P}_k(D), v_b|_e \in \mathbb{P}_k|_{e}, e \subset \partial D, \forall D \in \mathcal{T}_h^{wg} \},
\end{equation}
and $v_b$ is single-valued on WG edges. The global hybrid function space enforces strong continuity across the interface $\Gamma$ and the zero Dirichlet boundary condition on $\partial \Omega$:
\begin{equation}
	V_h = \left\{ v = (v_c, v_w) : v_c \in V_h^{cg}, v_w \in V_h^{wg}, \ v_w|_\Gamma =v_b|_\Gamma = v_c|_\Gamma, \ v_w|_{\partial \Omega} = 0 \right\}.
\end{equation}
For any $v \in V_h$, the weak gradient $\nabla_w v \in [\mathbb{P}_{k-1}(D)]^2$ on $D \in \mathcal{T}_h^{wg}$ is defined by:
\begin{equation}
	(\nabla_w v, \vec{q})_D = -(v_0, \nabla \cdot \vec{q})_D + \langle v_b, \vec{q} \cdot \mathbf{n}_{wg} \rangle_{\partial D}, \quad \forall \vec{q} \in [\mathbb{P}_{k-1}(D)]^2.
\end{equation}

\subsection{The Numerical Scheme}
We consider the Poisson equation $$-\Delta u = f$$ in $\Omega$ with $u = 0$ on $\partial \Omega$. The coupled WG-CG scheme seeks $u_h = (u_c, u_w) \in V_h$ such that:
\begin{equation} \label{eq:scheme}
	A_h(u_h, v) = (f, v_0)_{\Omega_{wg}} + (f, v_c)_{\Omega_{cg}}, \quad \forall v \in V_h,
\end{equation}
where the global bilinear form is defined as:
\begin{equation}
	A_h(u, v) = \sum_{D \in \mathcal{T}_h^{wg}} \Bigl( (\nabla_w u_w, \nabla_w v_w)_D + s_D(u_w, v_w) \Bigl) + \sum_{T \in \mathcal{T}_h^{cg}} (\nabla u_c, \nabla v_c)_T,
\end{equation}
and the stabilization term on the WG elements is
$$s_D(u_w, v_w) = h_D^{-1} \langle u_0 - u_b, v_0 - v_b \rangle_{\partial D}.$$

\noindent We define the energy norm $||| \cdot |||$ on $V_h$ as:
\begin{equation}
	||| v |||^2 = \sum_{D \in \mathcal{T}_h^{wg}} \left( \|\nabla_w v_w\|_{L^2(D)}^2 + s_D(v_w, v_w) \right) + \sum_{T \in \mathcal{T}_h^{cg}} \|\nabla v_c\|_{L^2(T)}^2.
\end{equation}

\begin{theorem}[Unique Solvability] \label{thm:unique_solvability}
	The coupled WG-CG numerical scheme \eqref{eq:scheme} has a unique solution $u_h \in V_h$.
\end{theorem}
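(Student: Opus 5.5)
Since \eqref{eq:scheme} is a square linear system on the finite-dimensional space $V_h$, existence follows from uniqueness. It therefore suffices to show that $f=0$ forces $u_h=0$. Equivalently, we show that $|||\cdot|||$ is a norm on $V_h$: if $A_h(v,v)=|||v|||^2=0$ then $v=0$. So let $v=(v_c,v_w)\in V_h$ with $|||v|||=0$, and propagate vanishing from the boundary inward.

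\textbf{Step 1 (boundary layer).} On each $D\in\mathcal{T}_h^{wg}$ we have $\nabla_w v_w=0$ and $s_D(v_w,v_w)=0$. The stabilization gives $v_0=v_b$ on $\partial D$. Insert this into the definition of $\nabla_w$ and integrate by parts. For all $\vec q\in[\mathbb{P}_{k-1}(D)]^2$ this yields
\[
0=(\nabla_w v_w,\vec q)_D=-(v_0,\nabla\cdot\vec q)_D+\langle v_0,\vec q\cdot\mathbf n_{wg}\rangle_{\partial D}=(\nabla v_0,\vec q)_D .
\]
Since $\nabla v_0\in[\mathbb{P}_{k-1}(D)]^2$, taking $\vec q=\nabla v_0$ gives $\nabla v_0=0$, so $v_0$ is constant on $D$. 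Integration by parts is legitimate on the Lipschitz curvilinear element $D$ because $v_0$ and $\vec q$ are polynomials. It follows that $v_b=v_0$ is the same constant $c_D$ on all of $\partial D$.

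\textbf{Step 2 (propagation across the layer).} Because $v_b$ is single-valued on WG edges, two adjacent boundary-layer elements sharing an outer--inner side carry the same constant. Moreover, each $D$ has its outer side on $\partial\Omega$, where $v_b=0$, so $c_D=0$ for every $D$. Thus $v_w=0$ on $\Omega_{wg}$; in particular $v_b|_\Gamma=0$. By the interface condition in $V_h$, this gives $v_c|_\Gamma=0$.

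\textbf{Step 3 (interior).} The CG part satisfies $\sum_T\|\nabla v_c\|_{L^2(T)}^2=0$, and $v_c\in C^0(\Omega_{cg})$. Hence $v_c$ is constant on the connected polygon $\Omega_{cg}$ (assuming the interior region is connected; otherwise we argue componentwise, since each component borders $\Gamma$). Since $v_c=0$ on $\Gamma=\partial\Omega_{cg}$, we conclude $v_c\equiv0$. Therefore $v=0$, and $A_h$ is coercive in the norm $|||\cdot|||$. Taking $f=0$ and $v=u_h$ in \eqref{eq:scheme} gives $|||u_h|||=0$, hence $u_h=0$, which proves uniqueness and hence existence.

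\textbf{Main obstacle.} The only delicate point is Step 1, namely justifying $\nabla_w v_w=0$ together with $v_0=v_b$ implies that $v_0$ is constant on a curved, possibly nonconvex, element. The integration-by-parts identity for polynomials on a Lipschitz domain handles this. Step 2 is simpler than it looks: the zero Dirichlet condition on each element's outer side is enough, and no connectivity argument through the layer is needed.
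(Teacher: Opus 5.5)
Your proof is correct and follows essentially the same route as the paper. Both reduce to uniqueness, use $|||v|||=0$ to get $v_0=v_b$ and $\nabla_w v_w=0$, test with $\vec q=\nabla v_0$ to make $v_0$ constant on each element, eliminate these constants via the Dirichlet condition, and then use the interface condition on $\Gamma$ to force the constant $v_c$ to vanish; your remarks that every boundary-layer element has its outer side on $\partial\Omega$ (so the paper's propagation step is unnecessary) and that a disconnected $\Omega_{cg}$ can be handled componentwise are harmless refinements of the same argument.
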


\begin{proof}
	Since the numerical scheme \eqref{eq:scheme} reduces to a square finite-dimensional linear system, it suffices to prove uniqueness. Let $v = (v_c, v_w) \in V_h$, with $v_w = (v_0, v_b)$, be the solution to the corresponding homogeneous problem (i.e., with $f = 0$):
	\begin{equation}
		A_h(v, v) = 0.
	\end{equation}
	By the definition of the bilinear form, this implies $||| v |||^2 = A_h(v,v) = 0$. Consequently, we obtain the following three conditions:
	\begin{enumerate}
		\item $\|\nabla v_c\|_{L^2(T)} = 0$ for all $T \in \mathcal{T}_h^{cg}$,
		\item $\|v_0 - v_b\|_{L^2(\partial D)} = 0 \implies v_0 = v_b$ on $\partial D$ for all $D \in \mathcal{T}_h^{wg}$,
		\item $\|\nabla_w v_w\|_{L^2(D)} = 0 \implies \nabla_w v_w = \mathbf{0}$ for all $D \in \mathcal{T}_h^{wg}$.
	\end{enumerate}
	From condition 1, since $v_c \in C^0(\Omega_{cg})$, $v_c$ must be a global constant everywhere in $\Omega_{cg}$.
	
	\noindent On each WG element $D$, because $v_0 \in \mathbb{P}_k(D)$, we have $\nabla v_0 \in[\mathbb{P}_{k-1}(D)]^2$. Therefore, we can choose $\vec{q} = \nabla v_0$ as the test function in the definition of the weak gradient to obtain:
	\begin{equation}
		0 = (\nabla_w v_w, \nabla v_0)_D = -(v_0, \Delta v_0)_D + \langle v_b, \nabla v_0 \cdot \mathbf{n}_{wg} \rangle_{\partial D}.
	\end{equation}
	Applying standard integration by parts to the first term yields:
	\begin{equation}
		-(v_0, \Delta v_0)_D = (\nabla v_0, \nabla v_0)_D - \langle v_0, \nabla v_0 \cdot \mathbf{n}_{wg} \rangle_{\partial D}.
	\end{equation}
	Substituting this back, we get:
	\begin{equation} \label{eq:solvability_step}
		0 = \|\nabla v_0\|_{L^2(D)}^2 - \langle v_0 - v_b, \nabla v_0 \cdot \mathbf{n}_{wg} \rangle_{\partial D}.
	\end{equation}
	By condition 2 ($v_0 = v_b$ on $\partial D$), the boundary integral in \eqref{eq:solvability_step} vanishes. Thus, $\|\nabla v_0\|_{L^2(D)} = 0$, which implies $v_0$ is a constant on each element $D$. Since $v_0 = v_b$ on $\partial D$, the boundary trace $v_b$ shares this exact same constant value. Because $v \in V_h$, the function satisfies the homogeneous Dirichlet boundary condition, meaning $v_b = 0$ on $\partial \Omega$. For any WG element $D$ that shares an edge with $\partial \Omega$, its internal constant value must therefore be zero. By propagating this zero value across adjacent WG elements via the single-valued interfaces $v_b$, we find that $v_w = (v_0, v_b) = (0, 0)$ globally throughout the boundary layer $\Omega_{wg}$.  {Finally, the space $V_h$ enforces continuity across the interface $\Gamma$, yielding $v_c|_\Gamma = v_w|_\Gamma = 0$. Since $v_c$ is a global constant on $\Omega_{cg}$, we conclude that $v_c = 0$ everywhere in $\Omega_{cg}$. Therefore, $v = 0$ over the entire domain $\Omega$, proving that the homogeneous problem has only the trivial solution. This guarantees the unique solvability of the scheme.}
\end{proof}

\section{Error Analysis}
\label{sec:error estimates}

\subsection{Global Interpolation-Projection Operator and the Error Equation}
Let $u$ be the exact solution. We define the global interpolation-projection  {operator} $\Pi_h u \in V_h$ as follows:
\begin{itemize}
	\item On $\Omega_{cg}$, $\Pi_h u = I_h u$, the standard continuous nodal interpolation of degree $k$.
	\item On $\Omega_{wg}$, $\Pi_h u|_{D} = Q_h u|_{D} = (Q_{k,D}^0 u, Q_{b,D}^\partial u)$, where $Q_{b,D}^\partial u$ uses the $L^2$ projection on edges $e \subset  \partial D_{proj}$, and the conforming nodal interpolation $I_{k,e}^\partial u$ on the interface $e \subset \Gamma$.
\end{itemize}
Because $I_{k,e}^\partial u$ perfectly matches the trace of the interior nodal interpolation $I_h u$ on $\Gamma$, we have 
$$Q_h u|_\Gamma = I_h u|_\Gamma,$$ 
ensuring $\Pi_h u \in V_h$.

\begin{lemma}[The Error Equation] \label{lem:error_eq}
	Let $u \in H^{k+1}(\Omega)\cap H_0^1(\Omega)$ be the exact solution of $-\Delta u=f$ in $\Omega$, and let
	$u_h=(u_c,u_w)\in V_h$ solve \eqref{eq:scheme}. Define the global interpolation--projection
	$\Pi_h u\in V_h$ by $\Pi_h u|_{\Omega_{cg}}=I_h u$ and $\Pi_h u|_{D}=Q_h u$ on each
	$D\in\mathcal T_h^{wg}$, and set $e_h:=\Pi_h u-u_h\in V_h$.
	Then for any $v=(v_c,v_w)\in V_h$ (with $v_w=(v_0,v_b)$), we have
	\begin{align}\label{eq:error_eq_1}
		A_h(e_h,v)
		&=\sum_{D\in\mathcal T_h^{wg}}
		\Big\langle (\nabla u-\mathbb Q_{k-1,D}\nabla u)\cdot \mathbf n_{wg},\, v_0-v_b\Big\rangle_{\partial D}
		+\sum_{D\in\mathcal T_h^{wg}}
		\Big\langle Q_{b,D}^\partial u-u,\, \nabla_w v_w\cdot \mathbf n_{wg}\Big\rangle_{\partial D}\nonumber\\
		&\quad +\sum_{D\in\mathcal T_h^{wg}} s_D(Q_h u, v_w)
		+\sum_{T\in\mathcal T_h^{cg}} (\nabla I_h u-\nabla u,\nabla v_c)_T .
	\end{align}
\end{lemma}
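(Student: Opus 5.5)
The plan is to compute $A_h(\Pi_h u,v)$ directly, rewrite it as $(f,v_0)_{\Omega_{wg}}+(f,v_c)_{\Omega_{cg}}$ plus remainder terms, and then subtract the scheme \eqref{eq:scheme}. Since $e_h=\Pi_h u-u_h$ and $A_h$ is bilinear, the remainder terms are exactly $A_h(e_h,v)$. The stabilization term $\sum_D s_D(Q_h u,v_w)$ and the CG volume term are carried along unchanged. The real work is in the weak-gradient term on the WG elements and in the interface $\Gamma$.

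\textbf{Step 1 (WG elements).} On each $D\in\mathcal T_h^{wg}$, take $\vec q=\nabla_w v_w\in[\mathbb P_{k-1}(D)]^2$ in \eqref{eq:lem1} of Lemma~\ref{lem:I-P-estimation}:
\[
(\nabla_w Q_h u,\nabla_w v_w)_D=(\mathbb Q_{k-1,D}\nabla u,\nabla_w v_w)_D+\langle Q_{b,D}^\partial u-u,\nabla_w v_w\cdot\mathbf n_{wg}\rangle_{\partial D}.
\]
Next expand the first term with the definition of the weak gradient, using $\vec q=\mathbb Q_{k-1,D}\nabla u$. Integrate by parts back on $v_0$, and use $(\mathbb Q_{k-1,D}\nabla u,\nabla v_0)_D=(\nabla u,\nabla v_0)_D$, which holds because $\nabla v_0\in[\mathbb P_{k-1}(D)]^2$. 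This gives
\[
(\mathbb Q_{k-1,D}\nabla u,\nabla_w v_w)_D=(\nabla u,\nabla v_0)_D-\langle \mathbb Q_{k-1,D}\nabla u\cdot\mathbf n_{wg},v_0-v_b\rangle_{\partial D}.
\]
Testing $-\Delta u=f$ with $v_0$ on $D$ and applying Green's formula gives
\[
(\nabla u,\nabla v_0)_D=(f,v_0)_D+\langle\nabla u\cdot\mathbf n_{wg},v_0\rangle_{\partial D}.
\]
Splitting $v_0=(v_0-v_b)+v_b$ in this boundary term produces the first sum in \eqref{eq:error_eq_1}, namely $\langle(\nabla u-\mathbb Q_{k-1,D}\nabla u)\cdot\mathbf n_{wg},v_0-v_b\rangle_{\partial D}$. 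It also leaves a leftover $\langle\nabla u\cdot\mathbf n_{wg},v_b\rangle_{\partial D}$.

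\textbf{Step 2 (CG elements).} On each $T$, the identity $(\nabla I_h u,\nabla v_c)_T=(\nabla I_h u-\nabla u,\nabla v_c)_T+(\nabla u,\nabla v_c)_T$ holds trivially. Summing over $T$ and applying Green's formula on $\Omega_{cg}$ gives
\[
\sum_T(\nabla u,\nabla v_c)_T=(f,v_c)_{\Omega_{cg}}+\langle\nabla u\cdot\mathbf n_{cg},v_c\rangle_\Gamma.
\]
Interior edge contributions cancel here because $v_c$ is continuous and $\nabla u\in H^1$ has a well-defined normal trace.

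\textbf{Step 3 (cancellation of leftover boundary terms).} This is the step I expect to need the most care. The leftovers are
\[
\sum_D\langle\nabla u\cdot\mathbf n_{wg},v_b\rangle_{\partial D}+\langle\nabla u\cdot\mathbf n_{cg},v_c\rangle_\Gamma,
\]
and I will show they sum to zero. On interior WG edges (outer--inner sides shared by two elements), $v_b$ is single-valued, the normal trace of $\nabla u$ is single-valued since $u\in H^{k+1}$ with $k\ge1$, and the two normals are opposite, so these contributions cancel. On $\partial\Omega$ they vanish because $v_b=0$. On $\Gamma$ we have $v_b=v_c$ and $\mathbf n_{wg}=-\mathbf n_{cg}$, so they cancel against the CG term.

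Justifying Green's formula and the edge traces on the curvilinear Lipschitz elements relies on Assumption~\ref{assum:shape_reg} and $\nabla u\in[H^1(D)]^2$. Collecting Steps 1--3 yields
\[
A_h(\Pi_h u,v)=(f,v_0)_{\Omega_{wg}}+(f,v_c)_{\Omega_{cg}}+\text{RHS of }\eqref{eq:error_eq_1}.
\]
Subtracting \eqref{eq:scheme} then gives \eqref{eq:error_eq_1}.
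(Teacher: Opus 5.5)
Your proposal is correct and follows essentially the same route as the paper. You apply \eqref{eq:lem1} with $\vec q=\nabla_w v_w$, expand $(\mathbb Q_{k-1,D}\nabla u,\nabla_w v_w)_D$ via the weak-gradient definition, the projection property and Green's formula, and cancel the leftover fluxes $\langle \nabla u\cdot\mathbf n_{wg},v_b\rangle_{\partial D}$ on outer--inner sides, on $\partial\Omega$, and against the CG flux on $\Gamma$ before subtracting the scheme; the only differences are that you group the flux cancellations into a single step and explicitly flag the trace/Green's-formula justification on curvilinear elements, which the paper leaves implicit.
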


\begin{proof}
	Fix $v=(v_c,v_w)\in V_h$ and write $v_w=(v_0,v_b)$ on each $D\in\mathcal T_h^{wg}$.
	Since $\Pi_h u=(I_h u, Q_h u)$, we begin by expanding
	\[
	A_h(\Pi_h u, v)
	=\sum_{D\in\mathcal T_h^{wg}}\Big( (\nabla_w Q_h u,\nabla_w v_w)_D+s_D(Q_h u,v_w)\Big)
	+\sum_{T\in\mathcal T_h^{cg}}(\nabla I_h u,\nabla v_c)_T .
	\]
	
	\medskip\noindent
	\textbf{Step 1: A local identity on each $D\in\mathcal T_h^{wg}$.}
	Take $\vec q=\nabla_w v_w\in[\mathbb P_{k-1}(D)]^2$ in Lemma~\ref{lem:I-P-estimation} (identity \eqref{eq:lem1})
	with $\xi=u$, to obtain
	\begin{equation}\label{eq:step1}
		(\nabla_w Q_h u,\nabla_w v_w)_D
		=(\mathbb Q_{k-1,D}\nabla u,\nabla_w v_w)_D
		+\big\langle Q_{b,D}^\partial u-u,\, \nabla_w v_w\cdot \mathbf n_{wg}\big\rangle_{\partial D}.
	\end{equation}
	By the definition of $\nabla_w$ with test vector $\mathbb Q_{k-1,D}\nabla u\in[\mathbb P_{k-1}(D)]^2$, we have
	\begin{align}\label{eq:step1-t1}
	(\mathbb Q_{k-1,D}\nabla u, \nabla_w v_w)_D
	&=-(\nabla\!\cdot(\mathbb Q_{k-1,D}\nabla u),v_0)_D
	+\big\langle (\mathbb Q_{k-1,D}\nabla u)\cdot \mathbf n_{wg}, v_b\big\rangle_{\partial D} \nonumber \\
	&=(\mathbb Q_{k-1,D}\nabla u,\nabla v_0)_D
	-\big\langle (\mathbb Q_{k-1,D}\nabla u)\cdot \mathbf n_{wg},\, v_0-v_b\big\rangle_{\partial D} \nonumber \\
	&=(\nabla u,\nabla v_0)_D
	-\big\langle (\mathbb Q_{k-1,D}\nabla u)\cdot \mathbf n_{wg},\, v_0-v_b\big\rangle_{\partial D},
	\end{align}
	where $\nabla v_0\in[\mathbb P_{k-1}(D)]^2$. Then, using $-\Delta u=f$ on $D$ and integrating by parts,
	\[
	(\nabla u,\nabla v_0)_D = (f,v_0)_D + \big\langle \nabla u\cdot \mathbf n_{wg},\, v_0\big\rangle_{\partial D}.
	\]
	Replacing $(\nabla u,\nabla v_0)_D$ by its right hand side, \eqref{eq:step1-t1} gives
	\begin{align}\label{eq:step1b}
		(\mathbb Q_{k-1,D}\nabla u,\nabla_w v_w)_D
		&=(f,v_0)_D
		+\big\langle \nabla u\cdot \mathbf n_{wg},\, v_0\big\rangle_{\partial D}
		-\big\langle (\mathbb Q_{k-1,D}\nabla u)\cdot \mathbf n_{wg},\, v_0-v_b\big\rangle_{\partial D}\nonumber\\
		&=(f,v_0)_D
		+\big\langle \nabla u\cdot \mathbf n_{wg},\, v_b\big\rangle_{\partial D}
		+\big\langle (\nabla u-\mathbb Q_{k-1,D}\nabla u)\cdot \mathbf n_{wg},\, v_0-v_b\big\rangle_{\partial D}.
	\end{align}
	Substituting \eqref{eq:step1b} into \eqref{eq:step1} yields the local representation
	\begin{align}\label{eq:local_wg_rep}
		(\nabla_w Q_h u,\nabla_w v_w)_D
		&=(f,v_0)_D
		+\big\langle \nabla u\cdot \mathbf n_{wg},\, v_b\big\rangle_{\partial D}
		+\big\langle (\nabla u-\mathbb Q_{k-1,D}\nabla u)\cdot \mathbf n_{wg},\, v_0-v_b\big\rangle_{\partial D}\nonumber\\
		&\quad+\big\langle Q_{b,D}^\partial u-u,\, \nabla_w v_w\cdot \mathbf n_{wg}\big\rangle_{\partial D}.
	\end{align}
	
	\medskip\noindent
	\textbf{Step 2: Summation over $\mathcal T_h^{wg}$ and boundary/interface reduction.}
	Summing \eqref{eq:local_wg_rep} over all $D\in\mathcal T_h^{wg}$ gives
	\begin{align}\label{eq:wg_sum}
		\sum_{D\in\mathcal T_h^{wg}}(\nabla_w Q_h u,\nabla_w v_w)_D
		&=(f,v_0)_{\Omega_{wg}}
		+\sum_{D\in\mathcal T_h^{wg}}\big\langle \nabla u\cdot \mathbf n_{wg},\, v_b\big\rangle_{\partial D}\nonumber\\
		&\quad+\sum_{D\in\mathcal T_h^{wg}}
		\big\langle (\nabla u-\mathbb Q_{k-1,D}\nabla u)\cdot \mathbf n_{wg},\, v_0-v_b\big\rangle_{\partial D}\nonumber\\
		&\quad+\sum_{D\in\mathcal T_h^{wg}}
		\big\langle Q_{b,D}^\partial u-u,\, \nabla_w v_w\cdot \mathbf n_{wg}\big\rangle_{\partial D}.
	\end{align}
	The flux term $\displaystyle \sum_D \langle \nabla u\cdot \mathbf n_{wg}, v_b\rangle_{\partial D}$ cancels on outer-inner sides
	of $D$ in $\Omega_{wg}$ (opposite normals and single-valued traces), and $v_b=0$ on $\partial\Omega$ by the definition
	of $V_h$. Hence,
	\begin{equation}\label{eq:flux_reduce}
		\sum_{D\in\mathcal T_h^{wg}}\big\langle \nabla u\cdot \mathbf n_{wg},\, v_b\big\rangle_{\partial D}
		=\big\langle \nabla u\cdot \mathbf n_{wg},\, v_b\big\rangle_{\Gamma}.
	\end{equation}
	
	\medskip\noindent
	\textbf{Step 3: The CG part and interface cancellation.}
	On $\Omega_{cg}$,
	\[
	\sum_{T\in\mathcal T_h^{cg}}(\nabla I_h u,\nabla v_c)_T
	=\sum_{T\in\mathcal T_h^{cg}}(\nabla u,\nabla v_c)_T
	+\sum_{T\in\mathcal T_h^{cg}}(\nabla I_h u-\nabla u,\nabla v_c)_T.
	\]
	Using $-\Delta u=f$ and integrating by parts over $\Omega_{cg}$ (whose boundary is $\Gamma$),
	\[
	(\nabla u,\nabla v_c)_{\Omega_{cg}}=(f,v_c)_{\Omega_{cg}}+\big\langle \nabla u\cdot \mathbf n_{cg},\, v_c\big\rangle_{\Gamma}.
	\]
	Therefore,
	\begin{equation}\label{eq:cg_rep}
		\sum_{T\in\mathcal T_h^{cg}}(\nabla I_h u,\nabla v_c)_T
		=(f,v_c)_{\Omega_{cg}}+\big\langle \nabla u\cdot \mathbf n_{cg},\, v_c\big\rangle_{\Gamma}
		+\sum_{T\in\mathcal T_h^{cg}}(\nabla I_h u-\nabla u,\nabla v_c)_T.
	\end{equation}
	On the interface $\Gamma$, the outward normals satisfy $\mathbf n_{wg}=-\mathbf n_{cg}$ and
	$v\in V_h$ enforces $v_b=v_c$ on $\Gamma$. Hence,
	\[
	\big\langle \nabla u\cdot \mathbf n_{wg},\, v_b\big\rangle_{\Gamma}
	+\big\langle \nabla u\cdot \mathbf n_{cg},\, v_c\big\rangle_{\Gamma}=0.
	\]
	
	\medskip\noindent
	\textbf{Step 4: Assemble $A_h(\Pi_h u,v)$ and subtract the discrete scheme.}
	Insert \eqref{eq:wg_sum}--\eqref{eq:flux_reduce} and \eqref{eq:cg_rep} into the expansion of $A_h(\Pi_h u,v)$.
	After the interface cancellation, we obtain
	\begin{align*}
		A_h(\Pi_h u,v)
		&=(f,v_0)_{\Omega_{wg}}+(f,v_c)_{\Omega_{cg}}
		+\sum_{D\in\mathcal T_h^{wg}}
		\big\langle (\nabla u-\mathbb Q_{k-1,D}\nabla u)\cdot \mathbf n_{wg},\, v_0-v_b\big\rangle_{\partial D}\\
		&\quad+\sum_{D\in\mathcal T_h^{wg}}
		\big\langle Q_{b,D}^\partial u-u,\, \nabla_w v_w\cdot \mathbf n_{wg}\big\rangle_{\partial D}
		+\sum_{D\in\mathcal T_h^{wg}} s_D(Q_h u, v_w)
		+\sum_{T\in\mathcal T_h^{cg}} (\nabla I_h u-\nabla u,\nabla v_c)_T.
	\end{align*}
	Finally, subtract the discrete equation \eqref{eq:scheme},
	\(
	A_h(u_h,v)=(f,v_0)_{\Omega_{wg}}+(f,v_c)_{\Omega_{cg}},
	\)
	to get \eqref{eq:error_eq_1} with $e_h=\Pi_h u-u_h$.
\end{proof}

\subsection{Energy Norm Estimate}

We first record two local approximation facts that hold on each shape-regular curvilinear element
$D\in\mathcal T_h^{wg}$ (star-shaped with uniform chunkiness), and on each $T\in\mathcal T_h^{cg}$.
They follow from the averaged Taylor polynomial (Bramble--Hilbert) argument on $D$ and standard
Lagrange interpolation theory on $T$.

\begin{lemma}[Local approximation estimates]\label{lem:local_approx}
	Let $k\ge 1$ and $u\in H^{k+1}(D)$.
	\begin{align}
		&\|u-Q_{k,D}^0u\|_{L^2(D)}+h_D\|\nabla(u-Q_{k,D}^0u)\|_{L^2(D)}
		\lesssim h_D^{k+1}\|u\|_{H^{k+1}(D)}, \label{eq:approx_Q0}\\
		&\|\nabla u-\mathbb Q_{k-1,D}\nabla u\|_{L^2(D)}+h_D\|\nabla(\nabla u-\mathbb Q_{k-1,D}\nabla u)\|_{L^2(D)}
		\lesssim h_D^{k}\|u\|_{H^{k+1}(D)}. \label{eq:approx_Qgrad}
	\end{align}
	Moreover, for any side $e\subset\partial D$,
	\begin{align}
		&\|u-Q_{k,D}^0u\|_{L^2(e)} \lesssim h_D^{k+1/2}\|u\|_{H^{k+1}(D)}, \label{eq:trace_Q0}\\
		&\|(\nabla u-\mathbb Q_{k-1,D}\nabla u)\cdot \mathbf n_{wg}\|_{L^2(e)}
		\lesssim h_D^{k-1/2}\|u\|_{H^{k+1}(D)}. \label{eq:trace_Qgrad}
	\end{align}
	Finally, on each $T\in\mathcal T_h^{cg}$,
	\begin{equation}\label{eq:approx_Ih}
		\|\nabla(u-I_hu)\|_{L^2(T)}\lesssim h_T^{k}\|u\|_{H^{k+1}(T)}.
	\end{equation}
\end{lemma}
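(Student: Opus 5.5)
The plan is to compare every projection with the averaged Taylor polynomial $p\in\mathbb P_k(D)$ of $u$ from Lemma~\ref{lem:taylor_bounds}, exactly as in the proof of Lemma~\ref{lem:I-P-estimation}. The side estimates then follow from the trace inequality (Lemma~\ref{lem:trace}), and the CG estimate \eqref{eq:approx_Ih} is the classical Lagrange interpolation bound.

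\textbf{Step 1: \eqref{eq:approx_Q0}.} For the $L^2$ term, best approximation gives $\|u-Q_{k,D}^0u\|_{L^2(D)}\le\|u-p\|_{L^2(D)}\lesssim h_D^{k+1}\|u\|_{H^{k+1}(D)}$ by \eqref{eq:taylor_L2}. For the gradient, split
\[
\nabla(u-Q^0_{k,D}u)=\nabla(u-p)+\nabla Q^0_{k,D}(p-u).
\]
The first piece is controlled by \eqref{eq:taylor_H1}. The second is a polynomial, so a polynomial inverse inequality gives $\|\nabla Q^0_{k,D}(p-u)\|_{L^2(D)}\lesssim h_D^{-1}\|Q^0_{k,D}(p-u)\|_{L^2(D)}\le h_D^{-1}\|u-p\|_{L^2(D)}$.

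\textbf{Main obstacle.} This inverse inequality $\|\nabla q\|_{L^2(D)}\lesssim h_D^{-1}\|q\|_{L^2(D)}$ for $q\in\mathbb P_k$ must hold on a curvilinear star-shaped $D$. I would prove it by norm equivalence. Since $\mathfrak B_D\subset D\subset B(x_D,h_D)$ with $\rho_D\ge\rho_{\min}$, after scaling to unit size all norms on the finite-dimensional space $\mathbb P_k$ over the ball of radius $\rho_D$ and over the ball of radius $1$ are equivalent, uniformly in $\rho_D\in[\rho_{\min},1/2]$. This gives
\[
\|\nabla q\|_{L^2(D)}\le\|\nabla q\|_{L^2(B(x_D,h_D))}\lesssim h_D^{-1}\|q\|_{L^2(\mathfrak B_D)}\le h_D^{-1}\|q\|_{L^2(D)}.
\]
This is the same mechanism that underlies Lemma~\ref{lem:inverse_trace}, so it could alternatively be quoted from \cite{Guan2023}.

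\textbf{Step 2: \eqref{eq:approx_Qgrad}.} Apply Step 1 componentwise with $k$ replaced by $k-1$ and the role of $u$ played by $\nabla u$. The vector $\nabla p$ is exactly the averaged Taylor polynomial of degree $k-1$ of $\nabla u$, and $\nabla p\in[\mathbb P_{k-1}]^2$, so $\|\nabla u-\mathbb Q_{k-1,D}\nabla u\|_{L^2(D)}\le\|\nabla(u-p)\|_{L^2(D)}\lesssim h_D^k\|u\|_{H^{k+1}(D)}$. For the derivative term, write
\[
\nabla(\nabla u-\mathbb Q_{k-1,D}\nabla u)=\nabla^2(u-p)+\nabla\mathbb Q_{k-1,D}\nabla(p-u).
\]
The first piece satisfies $\|\nabla^2(u-p)\|_{L^2(D)}\lesssim h_D^{k-1}|u|_{H^{k+1}(D)}$ by Bramble--Hilbert (Lemma~4.3.8 of \cite{BrennerScott2008}) at order $2$. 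The second piece is handled by the inverse inequality as in Step 1. For $k=1$ both pieces are trivially consistent, since $\nabla^2 p=0$ and $\mathbb Q_{0,D}\nabla u$ is constant.

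\textbf{Step 3: trace bounds \eqref{eq:trace_Q0} and \eqref{eq:trace_Qgrad}.} Apply Lemma~\ref{lem:trace} to $v=u-Q^0_{k,D}u$ and insert \eqref{eq:approx_Q0}:
\[
\|v\|_{L^2(e)}^2\lesssim h_D^{-1}h_D^{2k+2}\|u\|_{H^{k+1}(D)}^2+h_D\,h_D^{2k}\|u\|_{H^{k+1}(D)}^2=h_D^{2k+1}\|u\|_{H^{k+1}(D)}^2,
\]
which is \eqref{eq:trace_Q0}. Next apply Lemma~\ref{lem:trace} componentwise to $\nabla u-\mathbb Q_{k-1,D}\nabla u\in H^1(D)$ and insert \eqref{eq:approx_Qgrad}; this gives $h_D^{2k-1}$, hence \eqref{eq:trace_Qgrad}. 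The normal component is bounded pointwise by the vector magnitude because $|\mathbf n_{wg}|=1$ a.e. on the Lipschitz side $e$.

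\textbf{Step 4: \eqref{eq:approx_Ih}.} This is the standard Lagrange interpolation estimate on shape-regular triangles (Theorem~4.4.4 of \cite{BrennerScott2008}). It is valid because $H^{k+1}(T)\hookrightarrow C^0(\overline T)$ in two dimensions for $k\ge1$.
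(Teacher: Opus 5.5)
Your proposal is correct and follows the same route as the paper's sketched proof. You use the averaged Taylor polynomial with best approximation for the volume bounds, the trace inequality of Lemma~\ref{lem:trace} for the side bounds, and standard Lagrange interpolation on $T$. The paper never says how the gradient of the $L^2$-projection error is controlled, which best approximation alone does not give; your splitting $\nabla(u-Q_{k,D}^0u)=\nabla(u-p)+\nabla Q_{k,D}^0(p-u)$ combined with the inverse inequality on $D$ (obtained from $\mathfrak B_D\subset D\subset B(x_D,h_D)$ and uniform norm equivalence on $\mathbb P_k$) correctly supplies that step.
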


\begin{proof}
	The volume estimates \eqref{eq:approx_Q0}--\eqref{eq:approx_Qgrad} follow by taking the averaged Taylor
	polynomial $p\in\mathbb P_k(D)$ (respectively $\vec p\in[\mathbb P_{k-1}(D)]^2$ for $\nabla u$) as an
	approximant and then using the best-approximation property of $L^2$-projections together with the
	Bramble--Hilbert bounds on star-shaped domains. The boundary estimates
	\eqref{eq:trace_Q0}--\eqref{eq:trace_Qgrad} follow from the trace inequality on $D$ applied to
	$u-Q_{k,D}^0u$ and to $(\nabla u-\mathbb Q_{k-1,D}\nabla u)\cdot \mathbf n_{wg}$, using the corresponding
	volume bounds. The interpolation estimate \eqref{eq:approx_Ih} is standard for $C^0$ Lagrange elements
	on triangles.
\end{proof}

\begin{lemma}[Stabilization consistency on $D$]\label{lem:stab_consistency}
	Let $Q_hu|_D=(Q_{k,D}^0u,Q_{b,D}^\partial u)$ be the mixed interpolation--projection operator.
	Then for $u\in H^{k+1}(D)$,
	\begin{equation}\label{eq:stab_consistency}
		s_D(Q_hu,Q_hu)^{1/2}=h_D^{-1/2}\|Q_{k,D}^0u-Q_{b,D}^\partial u\|_{L^2(\partial D)}
		\lesssim h_D^{k}\|u\|_{H^{k+1}(D)}.
	\end{equation}
\end{lemma}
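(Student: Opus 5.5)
The equality in \eqref{eq:stab_consistency} is simply the definition of $s_D$ with $u_0=v_0=Q_{k,D}^0u$ and $u_b=v_b=Q_{b,D}^\partial u$. It therefore remains to bound $\|Q_{k,D}^0u-Q_{b,D}^\partial u\|_{L^2(e)}$ by $h_D^{k+1/2}\|u\|_{H^{k+1}(D)}$ on each of the four sides $e\subset\partial D$. Since $D$ has a fixed number of sides, summing the squares and multiplying by $h_D^{-1}$ then gives the claim.

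On each side $e$ I will insert $u$ and use the triangle inequality:
\[
\|Q_{k,D}^0u-Q_{b,D}^\partial u\|_{L^2(e)}\le \|Q_{k,D}^0u-u\|_{L^2(e)}+\|u-Q_{b,D}^\partial u\|_{L^2(e)}.
\]
The first term is exactly the trace estimate \eqref{eq:trace_Q0} of Lemma~\ref{lem:local_approx}. That estimate comes from the trace inequality (Lemma~\ref{lem:trace}) combined with the volume bound \eqref{eq:approx_Q0}, and it yields $h_D^{k+1/2}\|u\|_{H^{k+1}(D)}$.

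The second term is the edge error already controlled in Part~3 of the proof of Lemma~\ref{lem:I-P-estimation}, and I will reuse that argument in two cases.
\begin{itemize}
\item For $e\in\partial D_{proj}$, $Q_{k,e}^\partial$ is the $L^2(e)$ projection. Its best-approximation property against the averaged Taylor polynomial $p$, together with \eqref{eq:taylor1}, gives the bound $h_D^{k+1/2}\|u\|_{H^{k+1}(D)}$.
\item For the inner side $e\in\partial D_{int}$, $I_{k,e}^\partial$ reproduces $p$. I will split $u-I_{k,e}^\partial u=(u-p)-I_{k,e}^\partial(u-p)$. The first piece is handled by \eqref{eq:taylor1}. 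For the second piece I will use the $L^\infty$ stability of Lemma~\ref{lem:interpolation_stability}, then $\|\cdot\|_{L^2(e)}\lesssim h_D^{1/2}\|\cdot\|_{L^\infty(e)}$, then the bound $\|u-p\|_{L^\infty(D)}\lesssim h_D^k\|u\|_{H^{k+1}(D)}$ from \eqref{eq:taylor2}. This again gives $h_D^{k+1/2}\|u\|_{H^{k+1}(D)}$.
\end{itemize}

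The only delicate point is the inner side, where the nodal interpolant is not an $L^2$-best approximation. This is handled exactly as in \eqref{interp-on-e-1}--\eqref{interp-on-e-2}. It relies on $H^{k+1}(D)\hookrightarrow C^0(\overline D)$ for $k\ge1$, so that the trace is continuous and the nodal values are well-defined. It also relies on $h_e\le h_D$. Everything else is a direct combination of results already stated, with constants depending only on $\rho_{\min}$, $k$, and $\Lambda_k$.
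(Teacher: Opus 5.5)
Your proposal is correct and follows the paper's proof essentially step for step. The paper uses the same triangle inequality through $u$, the same trace estimate \eqref{eq:trace_Q0} for $Q_{k,D}^0u-u$, and the same two-case edge argument: best approximation for $Q_{k,e}^\partial$, and for $I_{k,e}^\partial$ the split through the averaged Taylor polynomial with $L^\infty$ stability, exactly as in \eqref{interp-on-e-1}--\eqref{interp-on-e-2}.
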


\begin{proof}
	By the triangle inequality,
	\[
	\|Q_{k,D}^0u-Q_{b,D}^\partial u\|_{L^2(\partial D)}
	\le \|Q_{k,D}^0u-u\|_{L^2(\partial D)}+\|u-Q_{b,D}^\partial u\|_{L^2(\partial D)}.
	\]
	The first term is bounded by \eqref{eq:trace_Q0} from Lemma~\ref{lem:local_approx}.
	
	\noindent For the second term, on each edge $e\subset\partial D$ we have either:
	(i) $Q_{b,D}^\partial=Q_{k,e}^\partial$ ($L^2$-projection), so
	$$\|u-Q_{b,D}^\partial u\|_{L^2(e)}\le \|u-p\|_{L^2(e)}\lesssim h_D^{k+1/2}\|u\|_{H^{k+1}(D)}$$
	using the same averaged Taylor polynomial $p\in\mathbb P_k(D)$ and the trace estimate; or
	(ii) $Q_{b,D}^\partial=I_{k,e}^\partial$ (nodal interpolant), and the bound
	$$\|u-I_{k,e}^\partial u\|_{L^2(e)}\lesssim h_D^{k+1/2}\|u\|_{H^{k+1}(D)}$$ follows from the same argument
	as in Lemma~\ref{lem:I-P-estimation} from \eqref{interp-on-e-1} to \eqref{interp-on-e-2}, using the stability of $I_{k,e}^\partial$ and the
	 {$L^\infty$ bound for $u-p$ provided by Lemma~\ref{lem:taylor_bounds}, which follows from the averaged Taylor polynomial estimate together with the Sobolev embedding $H^{k+1}(D)\hookrightarrow L^\infty(D)$ for $k\ge 1$ in two dimensions.}
	Summing over all sides of $\partial D$ yields
	\begin{equation}\label{err:u-QbDpartial-u}
	\|u-Q_{b,D}^\partial u\|_{L^2(\partial D)}\lesssim h_D^{k+1/2}\|u\|_{H^{k+1}(D)}.
	\end{equation}
	Multiplying by $h_D^{-1/2}$ proves \eqref{eq:stab_consistency}.
\end{proof}

\begin{theorem}[Energy norm estimate]\label{thm:energy_norm}
	Let $k\ge 1$ and assume $u\in H^{k+1}(\Omega)\cap H_0^1(\Omega)$ solves $-\Delta u=f$ in $\Omega$.
	Let $u_h\in V_h$ solve the coupled WG--CG scheme \eqref{eq:scheme}, and let $\Pi_hu\in V_h$ be the
	global interpolation--projection operator.
	Then there exists a constant $C>0$, independent of $h$, such that
	\begin{equation}\label{eq:energy_est}
		|||\,\Pi_hu-u_h\,||| \;\le\; C\, h^{k}\,\|u\|_{H^{k+1}(\Omega)}.
	\end{equation}
\end{theorem}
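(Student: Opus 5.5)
The plan is the standard energy argument. We take $v=e_h=\Pi_hu-u_h\in V_h$ in the error equation of Lemma~\ref{lem:error_eq}. By the definition of the energy norm, $A_h(e_h,e_h)=|||e_h|||^2$. It then suffices to bound each of the four terms on the right-hand side of \eqref{eq:error_eq_1} by $C h^k\|u\|_{H^{k+1}(\Omega)}|||v|||$ for arbitrary $v\in V_h$. Dividing by $|||e_h|||$ (or noting that the case $e_h=0$ is trivial) will then give \eqref{eq:energy_est}. Throughout, local bounds are summed with the discrete Cauchy--Schwarz inequality and $h_K\le h$, and we use $\sum_K\|u\|_{H^{k+1}(K)}^2\le\|u\|_{H^{k+1}(\Omega)}^2$.

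The four terms are handled as follows.

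\textbf{First term.} On each $D$, Cauchy--Schwarz on $\partial D$ gives
\[
\big|\langle(\nabla u-\mathbb Q_{k-1,D}\nabla u)\cdot\mathbf n_{wg},v_0-v_b\rangle_{\partial D}\big|
\le h_D^{1/2}\|(\nabla u-\mathbb Q_{k-1,D}\nabla u)\cdot\mathbf n_{wg}\|_{L^2(\partial D)}\, s_D(v_w,v_w)^{1/2}.
\]
By \eqref{eq:trace_Qgrad}, summed over the four sides of $D$, this is $\lesssim h_D^{k}\|u\|_{H^{k+1}(D)}\,s_D(v_w,v_w)^{1/2}$.

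\textbf{Second term.} We apply \eqref{eq:lem2} of Lemma~\ref{lem:I-P-estimation} with $\vec q=\nabla_w v_w\in[\mathbb P_{k-1}(D)]^2$. This directly gives $\lesssim h_D^k\|u\|_{H^{k+1}(D)}\|\nabla_w v_w\|_{L^2(D)}$.

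\textbf{Third term (stabilization).} Cauchy--Schwarz for the semi-inner product $s_D$ gives $|s_D(Q_hu,v_w)|\le s_D(Q_hu,Q_hu)^{1/2}s_D(v_w,v_w)^{1/2}$. Lemma~\ref{lem:stab_consistency} bounds the first factor by $h_D^k\|u\|_{H^{k+1}(D)}$.

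\textbf{Fourth term (CG part).} Cauchy--Schwarz together with \eqref{eq:approx_Ih} gives $\lesssim h_T^k\|u\|_{H^{k+1}(T)}\|\nabla v_c\|_{L^2(T)}$.

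Summing over elements, each term is controlled by $h^k\|u\|_{H^{k+1}(\Omega)}$ times the corresponding component of $|||v|||$.

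The main point needing care is the first term, specifically the trace estimate \eqref{eq:trace_Qgrad} on the curved side of $D$. It relies on the trace inequality of Lemma~\ref{lem:trace} on curvilinear shape-regular elements, applied to $\nabla u-\mathbb Q_{k-1,D}\nabla u$. We also need $\mathbf n_{wg}$ bounded in $L^\infty$, which holds since $|\mathbf n_{wg}|=1$ a.e.\ on the Lipschitz boundary. Both facts have already been recorded in Lemma~\ref{lem:local_approx}, so the argument reduces to bookkeeping.

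One further check is that the hidden constants are uniform over elements. This follows from (A3), since every constant depends only on $\rho_{\min}$, $k$, and the Lebesgue constant $\Lambda_k$. Finally, the edges of $D$ are counted as at most four, so summing the per-side bounds costs only a fixed factor.
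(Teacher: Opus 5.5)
Your proposal is correct and follows essentially the same route as the paper. The paper also tests the error equation with $e_h$ and bounds the four terms using \eqref{eq:trace_Qgrad} together with $\|e_0-e_b\|_{L^2(\partial D)}=h_D^{1/2}s_D(e_w,e_w)^{1/2}$, then \eqref{eq:lem2}, Lemma~\ref{lem:stab_consistency}, and \eqref{eq:approx_Ih}, before summing with the discrete Cauchy--Schwarz inequality and dividing by $|||e_h|||$.
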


\begin{proof}
	Set $e_h:=\Pi_hu-u_h\in V_h$. Taking $v=e_h$ in the error equation
	\eqref{eq:error_eq_1} gives
	\begin{align*}
		|||e_h|||^2
		&=\sum_{D\in\mathcal T_h^{wg}}
		\Big\langle (\nabla u-\mathbb Q_{k-1,D}\nabla u)\cdot \mathbf n_{wg},\, e_0-e_b\Big\rangle_{\partial D}
		+\sum_{D\in\mathcal T_h^{wg}}
		\Big\langle Q_{b,D}^\partial u-u,\, \nabla_w e_w\cdot \mathbf n_{wg}\Big\rangle_{\partial D}\\
		&\quad+\sum_{D\in\mathcal T_h^{wg}} s_D(Q_hu,e_w)
		+\sum_{T\in\mathcal T_h^{cg}}(\nabla I_hu-\nabla u,\nabla e_c)_T\\
		&=: J_1+J_2+J_3+J_4.
	\end{align*}
	We estimate each term by $C h^k\|u\|_{H^{k+1}(\Omega)}|||e_h|||$.
	
	\medskip\noindent
	\textbf{Estimate of $J_1$.}
	By Cauchy--Schwarz and \eqref{eq:trace_Qgrad} from Lemma~\ref{lem:local_approx},
	\[
	|J_1|
	\le \sum_D \|(\nabla u-\mathbb Q_{k-1,D}\nabla u)\cdot \mathbf n_{wg}\|_{L^2(\partial D)}
	\|e_0-e_b\|_{L^2(\partial D)}
	\lesssim \sum_D h_D^{k-1/2}\|u\|_{H^{k+1}(D)} \|e_0-e_b\|_{L^2(\partial D)}.
	\]
	Using $s_D(e_w,e_w)=h_D^{-1}\|e_0-e_b\|_{L^2(\partial D)}^2$, we have
	$\|e_0-e_b\|_{L^2(\partial D)} = h_D^{1/2}s_D(e_w,e_w)^{1/2}$, hence
	\[
	|J_1| \lesssim \sum_D h_D^{k}\|u\|_{H^{k+1}(D)}\, s_D(e_w,e_w)^{1/2}
	\le \Big(\sum_D h_D^{2k}\|u\|_{H^{k+1}(D)}^2\Big)^{1/2}
	\Big(\sum_D s_D(e_w,e_w)\Big)^{1/2}.
	\]
	Then $\sum_D h_D^{2k}\|u\|_{H^{k+1}(D)}^2 \le h^{2k}\|u\|_{H^{k+1}(\Omega_{wg})}^2$,
	and $\big(\sum_D s_D(e_w,e_w)\big)^{1/2}\le |||e_h|||$. Therefore
	\begin{equation}\label{eq:J1_bound}
		|J_1|\lesssim h^k\|u\|_{H^{k+1}(\Omega_{wg})}\,|||e_h||| \;\le\; h^k\|u\|_{H^{k+1}(\Omega)}\,|||e_h|||.
	\end{equation}
	
	\medskip\noindent
	\textbf{Estimate of $J_2$.}
	Apply Lemma~\ref{lem:I-P-estimation} \eqref{eq:lem2} with $\xi=u$ and
	$\vec q=\nabla_we_w\in[\mathbb P_{k-1}(D)]^2$ to obtain, on each $D$,
	\[
	\Big|\big\langle Q_{b,D}^\partial u-u,\,\nabla_we_w\cdot \mathbf n_{wg}\big\rangle_{\partial D}\Big|
	\lesssim h_D^k\|u\|_{H^{k+1}(D)}\|\nabla_we_w\|_{L^2(D)}.
	\]
	Summing over $D$ and using Cauchy--Schwarz gives
	\begin{equation}\label{eq:J2_bound}
		|J_2|\lesssim h^k\|u\|_{H^{k+1}(\Omega_{wg})}
		\Big(\sum_D \|\nabla_we_w\|_{L^2(D)}^2\Big)^{1/2}
		\le h^k\|u\|_{H^{k+1}(\Omega_{wg})}\,|||e_h||| .
	\end{equation}
	
	\medskip\noindent
	\textbf{Estimate of $J_3$.}
	By Cauchy--Schwarz in the stabilization inner product,
	\[
	|J_3|=\Big|\sum_D s_D(Q_hu,e_w)\Big|
	\le \Big(\sum_D s_D(Q_hu,Q_hu)\Big)^{1/2}
	\Big(\sum_D s_D(e_w,e_w)\Big)^{1/2}.
	\]
	Using Lemma~\ref{lem:stab_consistency} and $h_D\le h$,
	\[
	\Big(\sum_D s_D(Q_hu,Q_hu)\Big)^{1/2}
	\lesssim \Big(\sum_D h_D^{2k}\|u\|_{H^{k+1}(D)}^2\Big)^{1/2}
	\le h^k\|u\|_{H^{k+1}(\Omega_{wg})}.
	\]
	Also $\big(\sum_D s_D(e_w,e_w)\big)^{1/2}\le |||e_h|||$. Hence
	\begin{equation}\label{eq:J3_bound}
		|J_3|\lesssim h^k\|u\|_{H^{k+1}(\Omega)}\,|||e_h||| .
	\end{equation}
	
	\medskip\noindent
	\textbf{Estimate of $J_4$.}
	By Cauchy--Schwarz and the interpolation estimate \eqref{eq:approx_Ih},
	\[
	|J_4|
	\le \Big(\sum_T \|\nabla(I_hu-u)\|_{L^2(T)}^2\Big)^{1/2}
	\Big(\sum_T \|\nabla e_c\|_{L^2(T)}^2\Big)^{1/2}
	\lesssim h^k\|u\|_{H^{k+1}(\Omega_{cg})}\,|||e_h|||
	\le h^k\|u\|_{H^{k+1}(\Omega)}\,|||e_h||| .
	\]
	Thus,
	\begin{equation}\label{eq:J4_bound}
		|J_4|\lesssim h^k\|u\|_{H^{k+1}(\Omega)}\,|||e_h||| .
	\end{equation}
	
	\medskip\noindent
	\textbf{Conclusion.}
	Combining \eqref{eq:J1_bound}--\eqref{eq:J4_bound} yields
	\[
	|||e_h|||^2 \;\le\; C\, h^k\|u\|_{H^{k+1}(\Omega)}\,|||e_h||| .
	\]
	If $|||e_h|||=0$ the claim is trivial; otherwise divide both sides by $|||e_h|||$ to obtain
	\eqref{eq:energy_est}. This completes the proof.
\end{proof}

\subsection{$L^2$ Norm Estimate}\label{subsec:L2}

For $e_h=(e_c,e_w)\in V_h$ with $e_w=(e_0,e_b)$, define the piecewise scalar error
\[
e_h^*(x):=
\begin{cases}
	e_0(x), & x\in \Omega_{wg},\\
	e_c(x), & x\in \Omega_{cg}.
\end{cases}
\qquad\text{so that}\qquad
\|e_h^*\|_{L^2(\Omega)}^2=\|e_0\|_{L^2(\Omega_{wg})}^2+\|e_c\|_{L^2(\Omega_{cg})}^2 .
\]
 {The energy-norm estimate above uses only the geometric assumptions on the mesh and local approximation properties. To derive the $L^2$ estimate, we additionally assume} the usual elliptic regularity on the curved domain: for any $\Psi\in L^2(\Omega)$, the solution
$\Phi\in H_0^1(\Omega)$ of $-\Delta\Phi=\Psi$ satisfies
\begin{equation}\label{eq:dual_reg}
	\|\Phi\|_{H^2(\Omega)} \lesssim \|\Psi\|_{L^2(\Omega)}.
\end{equation}
 {A sufficient condition for this elliptic regularity is that the boundary is $C^{1,1}$.} 
\begin{theorem}\label{thm:L2_norm}
	Assume \eqref{eq:dual_reg}. Let $u\in H^{k+1}(\Omega)\cap H_0^1(\Omega)$ solve $-\Delta u=f$ in $\Omega$
	and let $u_h\in V_h$ solve \eqref{eq:scheme}. Let $e_h=\Pi_hu-u_h$ and define $e_h^*$ as above.
	Then there exists $C>0$, independent of $h$, such that
	\begin{equation}\label{eq:L2_est}
		\|e_h^*\|_{L^2(\Omega)} \le C\, h^{k+1}\,\|u\|_{H^{k+1}(\Omega)}.
	\end{equation}
\end{theorem}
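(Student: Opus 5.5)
We use the standard Aubin--Nitsche duality argument. Let $\Phi\in H_0^1(\Omega)$ solve $-\Delta\Phi=e_h^*$ in $\Omega$. By \eqref{eq:dual_reg},
\[
\|\Phi\|_{H^2(\Omega)}\lesssim\|e_h^*\|_{L^2(\Omega)}.
\]

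\textbf{Representation of the error.} We test the dual problem against $e_h^*$ elementwise. On each $T\in\mathcal T_h^{cg}$ we integrate by parts; since $e_c$ is continuous, the interior CG edge terms cancel and only $\langle\nabla\Phi\cdot\mathbf n_{cg},e_c\rangle_\Gamma$ survives. On each $D\in\mathcal T_h^{wg}$ we repeat Step~1 of Lemma~\ref{lem:error_eq} with $u$ replaced by $\Phi$ and $v=e_h$:
\[
(-\Delta\Phi,e_0)_D=(\nabla_w Q_h\Phi,\nabla_w e_w)_D-\langle(\nabla\Phi-\mathbb Q_{0,D}\nabla\Phi)\cdot\mathbf n_{wg},e_0-e_b\rangle_{\partial D}-\langle Q_{b,D}^\partial\Phi-\Phi,\nabla_w e_w\cdot\mathbf n_{wg}\rangle_{\partial D}+\langle\nabla\Phi\cdot\mathbf n_{wg},e_b\rangle_{\partial D}.
\]
Here the projections are those used in the scheme, onto $[\mathbb P_{k-1}]^2$. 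The last term telescopes exactly as in Steps~2--3 and cancels with the CG interface term, because $e_b=e_c$ on $\Gamma$ and $e_b=0$ on $\partial\Omega$. Adding and subtracting the stabilization and the CG interpolation term then gives
\[
\|e_h^*\|^2=A_h(\Pi_h\Phi,e_h)-\sum_D s_D(Q_h\Phi,e_w)-\sum_D\langle(\nabla\Phi-\mathbb Q_{k-1,D}\nabla\Phi)\cdot\mathbf n_{wg},e_0-e_b\rangle_{\partial D}-\sum_D\langle Q_{b,D}^\partial\Phi-\Phi,\nabla_w e_w\cdot\mathbf n_{wg}\rangle_{\partial D}-\sum_T(\nabla(I_h\Phi-\Phi),\nabla e_c)_T.
\]
Since $\Phi\in H^2$ and $k\ge1$, the interpolant $I_h\Phi$ is well defined, and $\Pi_h\Phi\in V_h$.

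\textbf{Estimating the terms.} The four remainder terms are bounded exactly like $J_1$--$J_4$ in Theorem~\ref{thm:energy_norm}, using the $k=1$ versions of Lemma~\ref{lem:I-P-estimation}, Lemma~\ref{lem:local_approx} and Lemma~\ref{lem:stab_consistency} applied to $\Phi\in H^2$. The higher-degree projections approximate at least as well as the degree-one ones. Each remainder is therefore $\lesssim h\|\Phi\|_{H^2}|||e_h|||\lesssim h^{k+1}\|u\|_{H^{k+1}}\|e_h^*\|$ by Theorem~\ref{thm:energy_norm}.

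\textbf{The main obstacle: $A_h(e_h,\Pi_h\Phi)$.} We write $A_h(\Pi_h\Phi,e_h)=A_h(e_h,\Pi_h\Phi)$ and expand it with the error equation \eqref{eq:error_eq_1}, taking $v=\Pi_h\Phi$. This yields four terms, each a product of a $u$-consistency error and a $\Phi$-quantity, and we must extract $h^{k+1}$ from each.

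\begin{itemize}
\item \emph{First term.} We use $\|Q^0_{k,D}\Phi-Q^\partial_{b,D}\Phi\|_{L^2(\partial D)}\lesssim h^{3/2}\|\Phi\|_{H^2(D)}$ from Lemma~\ref{lem:stab_consistency}, together with \eqref{eq:trace_Qgrad}. This gives $h^{k-1/2}\cdot h^{3/2}=h^{k+1}$.
\item \emph{Stabilization term.} Lemma~\ref{lem:stab_consistency} applied to both $u$ and $\Phi$ gives $h^k\cdot h$.
\item \emph{CG term.} We write $\nabla I_h\Phi=\nabla\Phi+\nabla(I_h\Phi-\Phi)$. The second piece gives $h^k\cdot h$. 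The first piece is $(\nabla(I_hu-u),\nabla\Phi)_{\Omega_{cg}}$; integrating by parts gives $-(I_hu-u,\Delta\Phi)$ plus $\langle I_hu-u,\nabla\Phi\cdot\mathbf n\rangle_\Gamma$. The volume part is $h^{k+1}$. For the boundary part, the trace bound $\|I_hu-u\|_{L^2(\Gamma\cap\partial T)}\lesssim h^{k+1/2}$ alone gives only $h^{k+1/2}$. We will pair it with the symmetric interface terms coming from the WG side.
\item \emph{Second WG term} $\sum_D\langle Q_{b,D}^\partial u-u,\nabla_w\Pi_h\Phi\cdot\mathbf n\rangle$. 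We split $\nabla_w\Pi_h\Phi=(\nabla_w Q_h\Phi-\mathbb Q_{k-1,D}\nabla\Phi)+(\mathbb Q_{k-1,D}\nabla\Phi-\nabla\Phi)+\nabla\Phi$. Lemma~\ref{lem:I-P-estimation} with $k=1$ and the inverse trace inequality handle the first two pieces, each giving $h^{k+1/2}\cdot h^{1/2}$. The remaining piece is $\sum_D\langle Q_{b,D}^\partial u-u,\nabla\Phi\cdot\mathbf n_{wg}\rangle_{\partial D}$.
  \begin{itemize}
  \item On outer-inner sides the contributions from the two neighbouring elements cancel, since $Q_{b}^\partial u$ is single-valued and the normals are opposite.
  \item On $\partial\Omega$ we have $u=0$, hence $Q_b^\partial u=0$, and the term vanishes.
  \item On $\Gamma$ we have $Q_b^\partial u=I_hu$ and $\mathbf n_{wg}=-\mathbf n_{cg}$, so this term cancels exactly the leftover CG interface term above.
  \end{itemize}
\end{itemize}

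Verifying this cancellation carefully is the delicate step. If it fails, we would fall back on $L^2(e)$ orthogonality of $Q^\partial_{k,e}$ against $\mathbb P_0$ to gain the extra $h^{1/2}$. Combining all the bounds and dividing by $\|e_h^*\|_{L^2(\Omega)}$ gives \eqref{eq:L2_est}.
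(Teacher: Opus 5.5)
Your proof is correct and follows the paper's skeleton. It uses the same duality identity $\|e_h^*\|^2=A_h(e_h,\Pi_h\Phi)+\mathcal R(\Phi;e_h)$ with the same four remainder terms, and the same expansion of $A_h(e_h,\Pi_h\Phi)$ by the error equation with $v=\Pi_h\Phi$. It also uses the same splitting $\nabla I_h\Phi=\nabla\Phi+\nabla(I_h\Phi-\Phi)$, followed by integration by parts on $\Omega_{cg}$.

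The one genuine difference is how you treat $\sum_D\langle Q_{b,D}^\partial u-u,\nabla_w Q_h\Phi\cdot\mathbf n_{wg}\rangle_{\partial D}$. The paper first annihilates this term on the outer--inner sides by $L^2(e)$-orthogonality. That step requires those sides to be straight, so that $\nabla_w Q_h\Phi\cdot\mathbf n_{wg}|_e\in\mathbb P_{k-1}(e)$. It reduces the term to $\langle I_hu-u,\nabla_w Q_h\Phi\cdot\mathbf n_{wg}\rangle_\Gamma$. Only then does the paper subtract the CG interface flux, so the mismatch $(\nabla_w Q_h\Phi-\nabla\Phi)\cdot\mathbf n_{wg}$ is estimated on $\Gamma$ alone.

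You instead subtract $\nabla\Phi$ elementwise first, estimate the mismatch on every side, and let the exact flux $\sum_D\langle Q_{b,D}^\partial u-u,\nabla\Phi\cdot\mathbf n_{wg}\rangle_{\partial D}$ telescope. This uses only the single-valuedness of $Q_{b}^\partial u-u$ and of the trace of $\nabla\Phi\in[H^1(\Omega)]^2$. It needs no orthogonality and would still work with curved outer--inner sides. It costs nothing extra, because the bounds $\|Q_{b,D}^\partial u-u\|_{L^2(e)}\lesssim h_D^{k+1/2}\|u\|_{H^{k+1}(D)}$ and $\|(\nabla_wQ_h\Phi-\nabla\Phi)\cdot\mathbf n_{wg}\|_{L^2(e)}\lesssim h_D^{1/2}\|\Phi\|_{H^2(D)}$ hold on every side. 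So the cancellation you flagged as delicate does hold, and the fallback is not needed.

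There are two small slips in your local identity on $D$, though your displayed final identity is correct. First, the flux term must be $-\langle\nabla\Phi\cdot\mathbf n_{wg},e_b\rangle_{\partial D}$; with a plus sign it would double the CG interface term rather than cancel it. Second, $\mathbb Q_{0,D}$ there should be $\mathbb Q_{k-1,D}$.

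Also, $(\mathbb Q_{k-1,D}\nabla\Phi-\nabla\Phi)\cdot\mathbf n_{wg}$ is not a polynomial. Its edge norm is therefore controlled by the trace estimate \eqref{eq:trace_Qgrad} at $H^2$ regularity, not by the inverse trace inequality.
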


\begin{proof}
	Let $\Phi\in H_0^1(\Omega)\cap H^2(\Omega)$ solve the dual problem
	\begin{equation}\label{eq:dual_problem}
		-\Delta\Phi=e_h^* \quad\text{in }\Omega,\qquad \Phi=0\quad\text{on }\partial\Omega.
	\end{equation}
	Let $\Pi_h\Phi\in V_h$ be the global interpolation--projection defined by
	$\Pi_h\Phi|_{\Omega_{cg}}=I_h\Phi$ and $\Pi_h\Phi|_D=Q_h\Phi=(Q_{k,D}^0\Phi,Q_{b,D}^\partial\Phi)$ on
	each $D\in\mathcal T_h^{wg}$.
	
	\medskip\noindent
	\textbf{Step 1: A duality identity $ \|e_h^*\|^2 = A_h(e_h,\Pi_h\Phi) + \mathcal R(\Phi;e_h)$.}
	By \eqref{eq:dual_problem},
	\[
	\|e_h^*\|_{L^2(\Omega)}^2 = (-\Delta\Phi,e_0)_{\Omega_{wg}}+ (-\Delta\Phi,e_c)_{\Omega_{cg}}.
	\]
	Integrating by parts elementwise on $\Omega_{wg}$ and once on $\Omega_{cg}$ yields
	\begin{align}\label{eq:dual_ibp}
		\|e_h^*\|_{L^2(\Omega)}^2
		&=\sum_{D\in\mathcal T_h^{wg}}(\nabla\Phi,\nabla e_0)_D
		-\sum_{D\in\mathcal T_h^{wg}}\langle \nabla\Phi\cdot \mathbf n_{wg},\, e_0\rangle_{\partial D}
		+(\nabla\Phi,\nabla e_c)_{\Omega_{cg}}
		-\langle \nabla\Phi\cdot \mathbf n_{cg},\, e_c\rangle_{\Gamma}.
	\end{align}
	Rewrite $\langle\nabla\Phi\cdot \mathbf n_{wg},e_0\rangle_{\partial D}
	=\langle\nabla\Phi\cdot \mathbf n_{wg},e_b\rangle_{\partial D}
	+\langle\nabla\Phi\cdot \mathbf n_{wg},e_0-e_b\rangle_{\partial D}$ and sum over $D$.
	The terms with $e_b$ cancel on interior WG edges and vanish on $\partial\Omega$; thus
	$\sum_D\langle\nabla\Phi\cdot \mathbf n_{wg},e_b\rangle_{\partial D}
	=\langle\nabla\Phi\cdot \mathbf n_{wg},e_b\rangle_{\Gamma}$.
	Since $\mathbf n_{wg}=-\mathbf n_{cg}$ on $\Gamma$ and $e_b=e_c$ on $\Gamma$ (because $e_h\in V_h$),
	the interface fluxes cancel:
	$$
	\langle\nabla\Phi\cdot\mathbf n_{wg},e_b\rangle_{\Gamma}
	+\langle\nabla\Phi\cdot\mathbf n_{cg},e_c\rangle_{\Gamma}=0.
	$$
	Therefore \eqref{eq:dual_ibp} reduces to
	\begin{equation}\label{eq:dual_reduced}
		\|e_h^*\|_{L^2(\Omega)}^2
		=\sum_{D}(\nabla\Phi,\nabla e_0)_D
		-\sum_{D}\langle \nabla\Phi\cdot \mathbf n_{wg},\, e_0-e_b\rangle_{\partial D}
		+(\nabla\Phi,\nabla e_c)_{\Omega_{cg}}.
	\end{equation}
	Now fix $D\in\mathcal T_h^{wg}$ and write $\mathbb Q_{k-1,D}$ for the $L^2(D)$-projection onto
	$[\mathbb P_{k-1}(D)]^2$.
	Split $\nabla\Phi=\mathbb Q_{k-1,D}\nabla\Phi+(\nabla\Phi-\mathbb Q_{k-1,D}\nabla\Phi)$ and use the identity
	(valid for any $\vec q\in[\mathbb P_{k-1}(D)]^2$)
	\begin{equation}\label{eq:wg_key_identity}
		(\nabla_w e_w,\vec q)_D = (\nabla e_0,\vec q)_D-\langle e_0-e_b, \vec q\cdot \mathbf n_{wg}\, \rangle_{\partial D},
	\end{equation}
	which follows directly from the definition of $\nabla_w$ by integrating $(e_0,\nabla\!\cdot \vec q)_D$ by parts.
	
	\noindent Taking $\vec q=\mathbb Q_{k-1,D}\nabla\Phi$ in \eqref{eq:wg_key_identity} and inserting into
	\eqref{eq:dual_reduced} gives
	\begin{align}\label{eq:dual_decomp_v1}
		\|e_h^*\|_{L^2(\Omega)}^2
		&=\sum_{D}(\nabla_w e_w,\mathbb Q_{k-1,D}\nabla\Phi)_D
		+\sum_{D}(\nabla e_0,\nabla\Phi-\mathbb Q_{k-1,D}\nabla\Phi)_D\nonumber\\
		&\quad -\sum_{D}\langle (\nabla\Phi-\mathbb Q_{k-1,D}\nabla\Phi)\cdot \mathbf n_{wg},\, e_0-e_b\rangle_{\partial D}\nonumber\\
		&\quad +(\nabla I_h\Phi,\nabla e_c)_{\Omega_{cg}}
		+(\nabla(\Phi-I_h\Phi),\nabla e_c)_{\Omega_{cg}}.
	\end{align}
	Since $e_0|_D \in \mathbb{P}_k(D)$, its gradient satisfies $\nabla e_0 \in [\mathbb{P}_{k-1}(D)]^2$. By the definition of the $L^2$ projection $\mathbb{Q}_{k-1,D}$, the second term vanishes identically:
	\begin{equation}\label{eq:vanish_term}
		\sum_{D}(\nabla e_0,\nabla\Phi-\mathbb Q_{k-1,D}\nabla\Phi)_D = 0.
	\end{equation}
	Next, apply Lemma~\ref{lem:I-P-estimation} (identity \eqref{eq:lem1}) to $\xi=\Phi$ with
	$\vec q=\nabla_w e_w\in[\mathbb P_{k-1}(D)]^2$:
	\[
	(\nabla_w Q_h\Phi,\nabla_w e_w)_D
	=(\mathbb Q_{k-1,D}\nabla\Phi,\nabla_w e_w)_D
	+\langle Q_{b,D}^\partial\Phi-\Phi,\, \nabla_w e_w\cdot \mathbf n_{wg}\rangle_{\partial D},
	\]
	hence
	\[
	(\nabla_w e_w,\mathbb Q_{k-1,D}\nabla\Phi)_D
	=(\nabla_w e_w,\nabla_w Q_h\Phi)_D
	-\langle Q_{b,D}^\partial\Phi-\Phi,\, \nabla_w e_w\cdot \mathbf n_{wg}\rangle_{\partial D}.
	\]
	Insert this into \eqref{eq:dual_decomp_v1} and add/subtract the stabilization term to identify $A_h$:
	\begin{equation}\label{eq:duality_identity_final}
		\|e_h^*\|_{L^2(\Omega)}^2
		= A_h(e_h,\Pi_h\Phi) + \mathcal R(\Phi;e_h),
	\end{equation}
	where the remainder $\mathcal R(\Phi;e_h)$ drops the zero term \eqref{eq:vanish_term} and is simply given by
	\begin{align}\label{eq:R_def_v1}
		\mathcal R(\Phi;e_h)
		&:= -\sum_{D}\langle (\nabla\Phi-\mathbb Q_{k-1,D}\nabla\Phi)\cdot \mathbf n_{wg},\, e_0-e_b\rangle_{\partial D}\nonumber\\
		&\quad -\sum_{D}\langle Q_{b,D}^\partial\Phi-\Phi,\, \nabla_w e_w\cdot \mathbf n_{wg}\rangle_{\partial D} \nonumber \\
		&\quad +(\nabla(\Phi-I_h\Phi),\nabla e_c)_{\Omega_{cg}}
		-\sum_{D}s_D(e_w,Q_h\Phi).
	\end{align}
	
	\medskip\noindent
	\textbf{Step 2: Bound the remainder $\mathcal R(\Phi;e_h)$.}
	We claim
	\begin{equation}\label{eq:R_bound}
		|\mathcal R(\Phi;e_h)| \lesssim h\,\|\Phi\|_{H^2(\Omega)}\,|||e_h||| .
	\end{equation}
	Because $\Phi \in H^2(\Omega)$, by Lemmas \ref{lem:local_approx}--\ref{lem:stab_consistency} and \eqref{err:u-QbDpartial-u}, we have the following approximation estimates:
	\begin{align*}
	&\|(\nabla\Phi-\mathbb Q_{k-1,D}\nabla\Phi)\cdot \mathbf n_{wg}\|_{L^2(\partial D)}\lesssim h_D^{1/2}\|\Phi\|_{H^2(D)},\\
	&\|Q_{b,D}^\partial\Phi-\Phi\|_{L^2(\partial D)}\lesssim h_D^{3/2}\|\Phi\|_{H^2(D)}, \\
	&s_D(Q_h\Phi,Q_h\Phi)^{1/2}\lesssim h_D\|\Phi\|_{H^2(D)},\\
	&\|\nabla(\Phi-I_h\Phi)\|_{L^2(\Omega_{cg})}\lesssim h\|\Phi\|_{H^2(\Omega)}.
	\end{align*}
	Using these facts, each term in \eqref{eq:R_def_v1} is bounded as follows:
	\begin{itemize}
		\item For the first term, we write $h_D^{-1/2}\|e_0-e_b\|_{L^2(\partial D)} = s_D(e_w, e_w)^{1/2}$ and bound it by the energy norm.
		\item For the second term, apply the inverse trace inequality for polynomials to get $\|\nabla_w e_w \cdot \mathbf n_{wg}\|_{L^2(\partial D)} \lesssim h_D^{-1/2} \|\nabla_w e_w\|_{L^2(D)}$ and multiply by the $h_D^{3/2}$ bound.
		\item The third and fourth terms are directly bounded using Cauchy--Schwarz and the definition of the energy norm.
	\end{itemize}
	Summing over all elements yields $|\mathcal{R}(\Phi; e_h)| \le C h\|\Phi\|_{H^2(\Omega)}|||e_h|||$. 
	
	\medskip\noindent
	\textbf{Step 3: Bound $A_h(e_h,\Pi_h\Phi)$ using the error equation and interface cancellation.}
	Apply the error equation \eqref{eq:error_eq_1} with $v=\Pi_h\Phi$ to obtain
	\begin{align}\label{eq:Ah_error}
		A_h(e_h,\Pi_h\Phi)
		&=\sum_{D}\Big\langle (\nabla u-\mathbb Q_{k-1,D}\nabla u)\cdot \mathbf n_{wg},\, Q_{k,D}^0\Phi-Q_{b,D}^\partial\Phi\Big\rangle_{\partial D}\nonumber\\
		&\quad +\sum_{D}\Big\langle Q_{b,D}^\partial u-u,\, \nabla_w Q_h\Phi\cdot \mathbf n_{wg}\Big\rangle_{\partial D}\nonumber\\
		&\quad +\sum_{D}s_D(Q_hu,Q_h\Phi)
		+(\nabla I_hu-\nabla u,\nabla I_h\Phi)_{\Omega_{cg}}.
	\end{align}
	The first and third terms are estimated directly:
	\begin{align*}
		\Big|\sum_{D}\langle (\nabla u-\mathbb Q_{k-1,D}\nabla u)\cdot \mathbf n_{wg},\, Q_{k,D}^0\Phi-Q_{b,D}^\partial\Phi\rangle_{\partial D}\Big| 
		&\lesssim \sum_D (h_D^{k-1/2}\|u\|_{H^{k+1}(D)})(h_D^{3/2}\|\Phi\|_{H^2(D)}) \\
		&\lesssim h^{k+1}\|u\|_{H^{k+1}(\Omega)}\|\Phi\|_{H^2(\Omega)},
	\end{align*}
	and
	\begin{align*}
		\Big|\sum_{D}s_D(Q_hu,Q_h\Phi)\Big|
		&\le \Big(\sum_D s_D(Q_hu,Q_hu)\Big)^{1/2}\Big(\sum_D s_D(Q_h\Phi,Q_h\Phi)\Big)^{1/2}\\
		&\lesssim (h^k\|u\|_{H^{k+1}(\Omega)})(h\|\Phi\|_{H^2(\Omega)})
		\lesssim h^{k+1}\|u\|_{H^{k+1}(\Omega)}\|\Phi\|_{H^2(\Omega)}.
	\end{align*}
	For the remaining two terms in \eqref{eq:Ah_error}, we \emph{do not} bound them separately; instead, we
	combine them to exploit cancellation on the interface $\Gamma$.
	
	\noindent \emph{(a) Reduce the WG boundary term to $\Gamma$.}
	We split the boundary $\partial D_{proj}$ into straight outer-inner edges (which are internal to the domain) and curved outer edges (which lie on $\partial\Omega$). On any straight edge $e\subset\partial D_{proj} \setminus \partial\Omega$, the normal vector $\mathbf n_{wg}$ is constant. Since $\nabla_w Q_h\Phi \in[\mathbb P_{k-1}(D)]^2$, its normal trace $\nabla_w Q_h\Phi\cdot \mathbf n_{wg}|_e$ is a polynomial in $\mathbb P_{k-1}(e)\subset \mathbb P_k(e)$. Because $Q_{b,D}^\partial$ is the $L^2(e)$-projection onto $\mathbb P_k(e)$, by orthogonality we have:
	\[
	\langle Q_{b,D}^\partial u-u,\ \nabla_w Q_h\Phi\cdot \mathbf n_{wg}\rangle_e=0,\qquad e\subset\partial D_{proj} \setminus \partial\Omega.
	\]
	On the other hand, for any curved edge $e \subset \partial D_{proj} \cap \partial\Omega$, the homogeneous Dirichlet boundary condition requires $u=0$. Consequently, $Q_{b,D}^\partial u = 0$, making the integral trivially zero on $\partial \Omega$. 
	
	\noindent Hence, the only non-vanishing contribution comes from the interface $\Gamma$ (where $e \subset \partial D_{int}$):
	\begin{equation}\label{eq:wg_term_on_Gamma}
		\sum_{D}\Big\langle Q_{b,D}^\partial u-u,\ \nabla_w Q_h\Phi\cdot \mathbf n_{wg}\Big\rangle_{\partial D}
		=\Big\langle I_hu-u,\ \nabla_w Q_h\Phi\cdot \mathbf n_{wg}\Big\rangle_{\Gamma},
	\end{equation}
	because on $\Gamma$ we defined $Q_{b,D}^\partial u=I_{k,e}^\partial u=I_hu|_\Gamma$.
	
	\noindent \emph{(b) Split the CG term and integrate by parts to produce a cancelling interface flux.}
	Write
	\begin{align}\label{eq:cg_split_v1}
		(\nabla I_hu-\nabla u,\nabla I_h\Phi)_{\Omega_{cg}}
		&=(\nabla I_hu-\nabla u,\nabla(I_h\Phi-\Phi))_{\Omega_{cg}}
		+(\nabla I_hu-\nabla u,\nabla\Phi)_{\Omega_{cg}}.
	\end{align}
	The first term is bounded by interpolation:
	\[
	|(\nabla I_hu-\nabla u,\nabla(I_h\Phi-\Phi))_{\Omega_{cg}}|
	\lesssim (h^k\|u\|_{H^{k+1}(\Omega)})(h\|\Phi\|_{H^2(\Omega)})
	\lesssim h^{k+1}\|u\|_{H^{k+1}(\Omega)}\|\Phi\|_{H^2(\Omega)}.
	\]
	For the second term, integrate by parts on $\Omega_{cg}$ (whose outward normal is $\mathbf n_{cg}=-\mathbf n_{wg}$):
	\begin{align}\label{eq:cg_ibp_v1}
		(\nabla I_hu-\nabla u,\nabla\Phi)_{\Omega_{cg}}
		&=-(I_hu-u,\Delta\Phi)_{\Omega_{cg}}+\langle I_hu-u,\nabla\Phi\cdot \mathbf n_{cg}\rangle_{\Gamma}\nonumber\\
		&=(I_hu-u,e_h^*)_{\Omega_{cg}}-\langle I_hu-u,\nabla\Phi\cdot \mathbf n_{wg}\rangle_{\Gamma},
	\end{align}
	where we used $-\Delta\Phi=e_h^*$ and $\mathbf n_{cg}=-\mathbf n_{wg}$.
	
	Now combine the interface flux in \eqref{eq:cg_ibp_v1} with \eqref{eq:wg_term_on_Gamma}:
	\begin{align}\label{eq:interface_cancellation}
		\Big\langle I_hu-u,\ \nabla_w Q_h\Phi\cdot \mathbf n_{wg}\Big\rangle_{\Gamma}
		-\Big\langle I_hu-u,\ \nabla\Phi\cdot \mathbf n_{wg}\Big\rangle_{\Gamma}
		=\Big\langle I_hu-u,\ (\nabla_w Q_h\Phi-\nabla\Phi)\cdot \mathbf n_{wg}\Big\rangle_{\Gamma}.
	\end{align}
	Using the inverse trace inequality and Lemma~\ref{lem:I-P-estimation} (estimate \eqref{eq:lem3} applied to
	$\xi=\Phi\in H^2$ gives $\|\nabla_w Q_h\Phi-\mathbb Q_{k-1,D}\nabla\Phi\|_{L^2(D)}\lesssim h_D\|\Phi\|_{H^2(D)}$),
	together with $\|\nabla\Phi-\mathbb Q_{k-1,D}\nabla\Phi\|_{L^2(D)}\lesssim h_D\|\Phi\|_{H^2(D)}$, we obtain
	\begin{equation}\label{eq:flux_mismatch}
		\|(\nabla_w Q_h\Phi-\nabla\Phi)\cdot \mathbf n_{wg}\|_{L^2(\Gamma)}
		\lesssim h^{1/2}\|\Phi\|_{H^2(\Omega)}.
	\end{equation}
	Therefore, using $\|I_hu-u\|_{L^2(\Gamma)}\lesssim h^{k+1/2}\|u\|_{H^{k+1}(\Omega)}$,
	\begin{equation}\label{eq:interface_bound}
		\Big|\langle I_hu-u,\ (\nabla_w Q_h\Phi-\nabla\Phi)\cdot \mathbf n_{wg}\rangle_{\Gamma}\Big|
		\lesssim h^{k+1}\|u\|_{H^{k+1}(\Omega)}\|\Phi\|_{H^2(\Omega)}.
	\end{equation}
	Finally, from \eqref{eq:cg_ibp_v1},
	\begin{equation}\label{eq:bulk_L2_term}
		|(I_hu-u,e_h^*)_{\Omega_{cg}}|
		\le \|I_hu-u\|_{L^2(\Omega_{cg})}\,\|e_h^*\|_{L^2(\Omega)}
		\lesssim h^{k+1}\|u\|_{H^{k+1}(\Omega)}\,\|e_h^*\|_{L^2(\Omega)}.
	\end{equation}
	Collecting all bounds for the four terms in \eqref{eq:Ah_error} (using \eqref{eq:wg_term_on_Gamma},
	\eqref{eq:cg_split_v1}--\eqref{eq:bulk_L2_term}), we obtain
	\begin{equation}\label{eq:Ah_bound}
		|A_h(e_h,\Pi_h\Phi)|
		\lesssim h^{k+1}\|u\|_{H^{k+1}(\Omega)}\Big(\|\Phi\|_{H^2(\Omega)}+\|e_h^*\|_{L^2(\Omega)}\Big).
	\end{equation}
	
	\medskip\noindent
	\textbf{Step 4: Conclude the $L^2$ estimate.}
	Insert \eqref{eq:Ah_bound} and \eqref{eq:R_bound} into \eqref{eq:duality_identity_final}:
	\[
	\|e_h^*\|_{L^2(\Omega)}^2
	\lesssim h^{k+1}\|u\|_{H^{k+1}(\Omega)}\Big(\|\Phi\|_{H^2(\Omega)}+\|e_h^*\|_{L^2(\Omega)}\Big)
	+ h\,\|\Phi\|_{H^2(\Omega)}\,|||e_h||| .
	\]
	Using the energy estimate $|||e_h|||\lesssim h^k\|u\|_{H^{k+1}(\Omega)}$ (Theorem~\ref{thm:energy_norm})
	and the dual regularity \eqref{eq:dual_reg} with $\Psi=e_h^*$ (so $\|\Phi\|_{H^2(\Omega)}\lesssim \|e_h^*\|_{L^2(\Omega)}$),
	we arrive at
	\[
	\|e_h^*\|_{L^2(\Omega)}^2
	\lesssim h^{k+1}\|u\|_{H^{k+1}(\Omega)}\,\|e_h^*\|_{L^2(\Omega)}.
	\]
	If $\|e_h^*\|_{L^2(\Omega)}=0$ we are done; otherwise divide by $\|e_h^*\|_{L^2(\Omega)}$ to obtain
	\eqref{eq:L2_est}.
\end{proof}

\section{Numerical Experiments}\label{sec:numerics}

This section verifies the convergence theory for the coupled WG--CG method on curved domains.
All computations are performed on a unit disk
\[
\Omega=\{(x,y)\in\mathbb R^2:\ x^2+y^2<1\},
\]
with a boundary-layer mesh: a collection of curvilinear WG elements adjacent to $\partial\Omega$,
and a polygonal interior region triangulated for the conforming CG method.

\medskip
\noindent
{\bf Meshes.}
Let $R_{\mathrm{out}}$ and $R_{\mathrm{in}}$ denote the outer and inner radii of the boundary layer, respectively. Since the computational domain is the unit disk, we set $R_{\mathrm{out}}=1$ and $R_{\mathrm{in}}=1-H$, so that
\[
H:=R_{\mathrm{out}}-R_{\mathrm{in}}
\]
is exactly the boundary-layer thickness. The interface $\Gamma$ is a
regular $N_\theta$-gon with
\[
N_\theta \approx \left\lfloor \frac{2\pi}{H} \right\rfloor,
\]
generated by connecting vertices on the circle of radius $R_{\mathrm{in}}$.
Each WG element $D$ is bounded by one outer circular arc on $\partial\Omega$, two radial segments, and one inner chord
(edge on $\Gamma$). The interior $\Omega_{cg}$ is triangulated by a Delaunay triangulation of the interior nodes. For each refinement, $H$ is decreased by half, $H$ starts from 0.1, see Figure \ref{fig:P1-mesh}.

\begin{figure}[h!]
	\centering
	\includegraphics[width=0.4\textwidth]{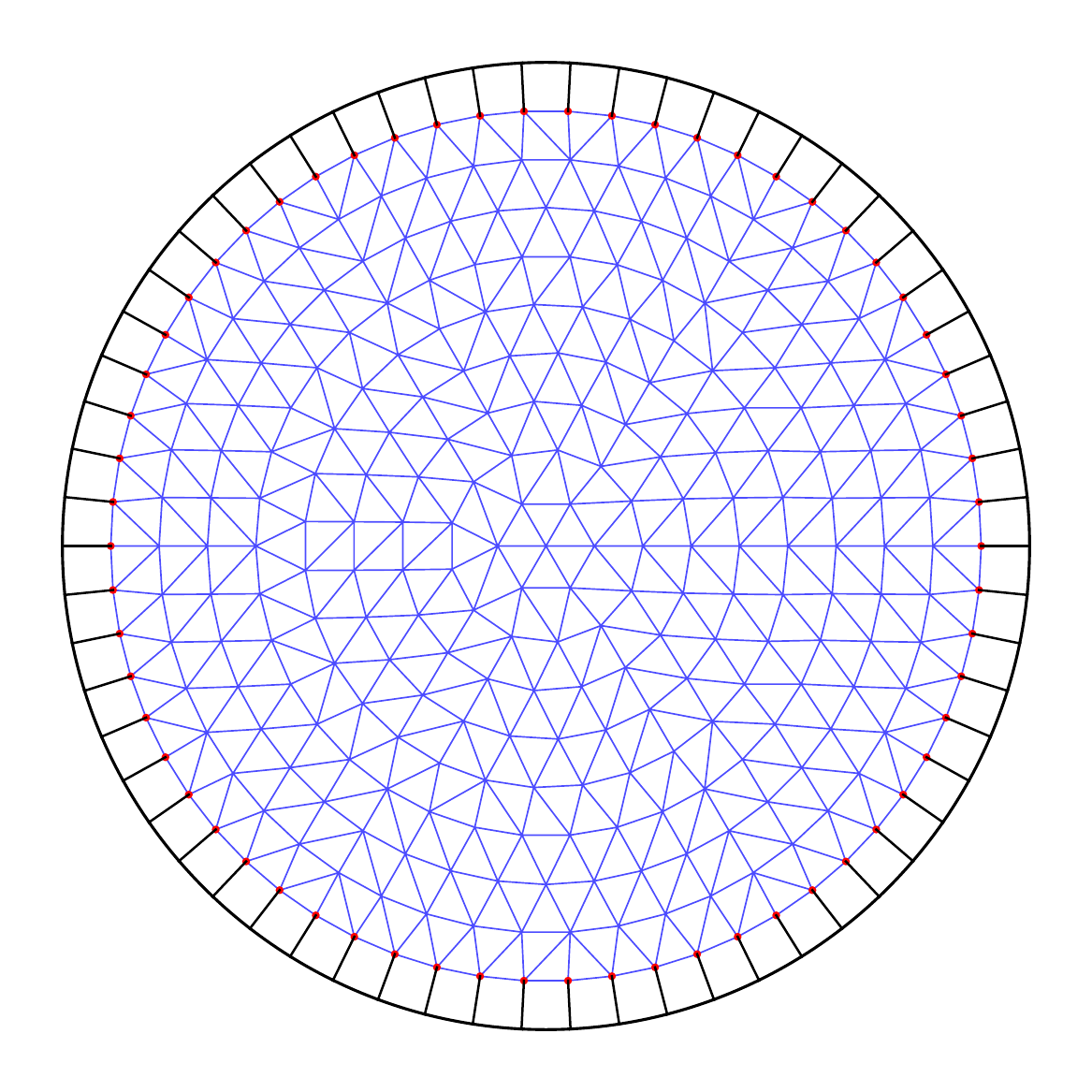}
	\includegraphics[width=0.4\textwidth]{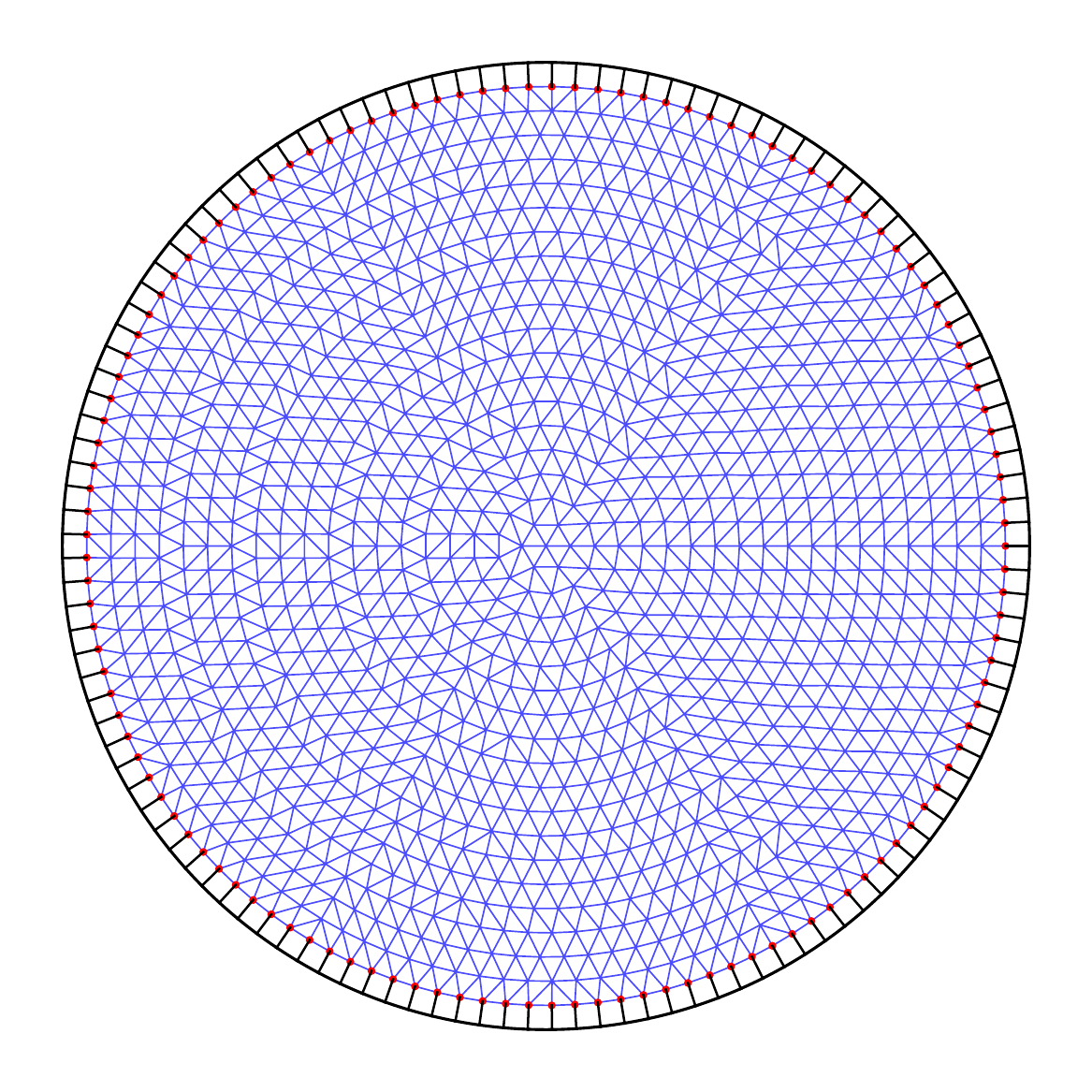}
	\caption{The coupled mesh for Continuous Galerkin and Weak Galerkin. {\bf Left:} a coarse mesh, $H = 0.1$. {\bf Right:} a refined mesh, $H = 0.05$. Red dots are interface points. The numerical solution is continuous in the CG region (triangles) but discontinuous in the WG region (curvilinear polygons). As the mesh is refined, the boundary layer becomes thinner. }
	\label{fig:P1-mesh}
\end{figure}

\medskip
\noindent
{\bf Error metrics and empirical rates.}  
Let $u$ be the exact solution and $u_h=(u_c,u_w)\in V_h$ be the discrete solution.
We report the energy-norm error
\[
|||\,\Pi_h u-u_h\,|||^2
=\sum_{D\in\mathcal T_h^{wg}}
\Big(\|\nabla_w(\Pi_hu-u_h)\|_{L^2(D)}^2+s_D(\Pi_hu-u_h,\Pi_hu-u_h)\Big)
+\sum_{T\in\mathcal T_h^{cg}}\|\nabla(\Pi_hu-u_h)\|_{L^2(T)}^2,
\]
and the $L^2$-error of the piecewise scalar field
\[
u_h^*(x)=
\begin{cases}
	(u_h)_0(x), & x\in\Omega_{wg},\\
	u_c(x), & x\in\Omega_{cg},
\end{cases}
\quad
\text{so that}\quad
\|u-u_h^*\|_{L^2(\Omega)}^2=\|u-(u_h)_0\|_{L^2(\Omega_{wg})}^2+\|u-u_c\|_{L^2(\Omega_{cg})}^2.
\]
Given a refinement sequence $\{H_\ell\}$ and the errors $\{E_{H_\ell}\}$, the empirical rate is computed by
\[
\mathrm{rate}=\frac{\log(E_{H_\ell}/E_{H_{\ell+1}})}{\log(H_\ell/H_{\ell+1})}
\quad\text{(with } H_{\ell+1}<H_\ell\text{)}.
\]
The theory predicts
\[
|||\,\Pi_hu-u_h\,|||\;=\;\mathcal O(h^k),
\qquad
\|u-u_h^*\|_{L^2(\Omega)}\;=\;\mathcal O(h^{k+1}).
\]
We confirm these rates for $k=1$ ($P_1$ element) and $k=2$ ($P_2$ element). 

\begin{figure}[h!]
	\centering
	\includegraphics[width=0.48\textwidth]{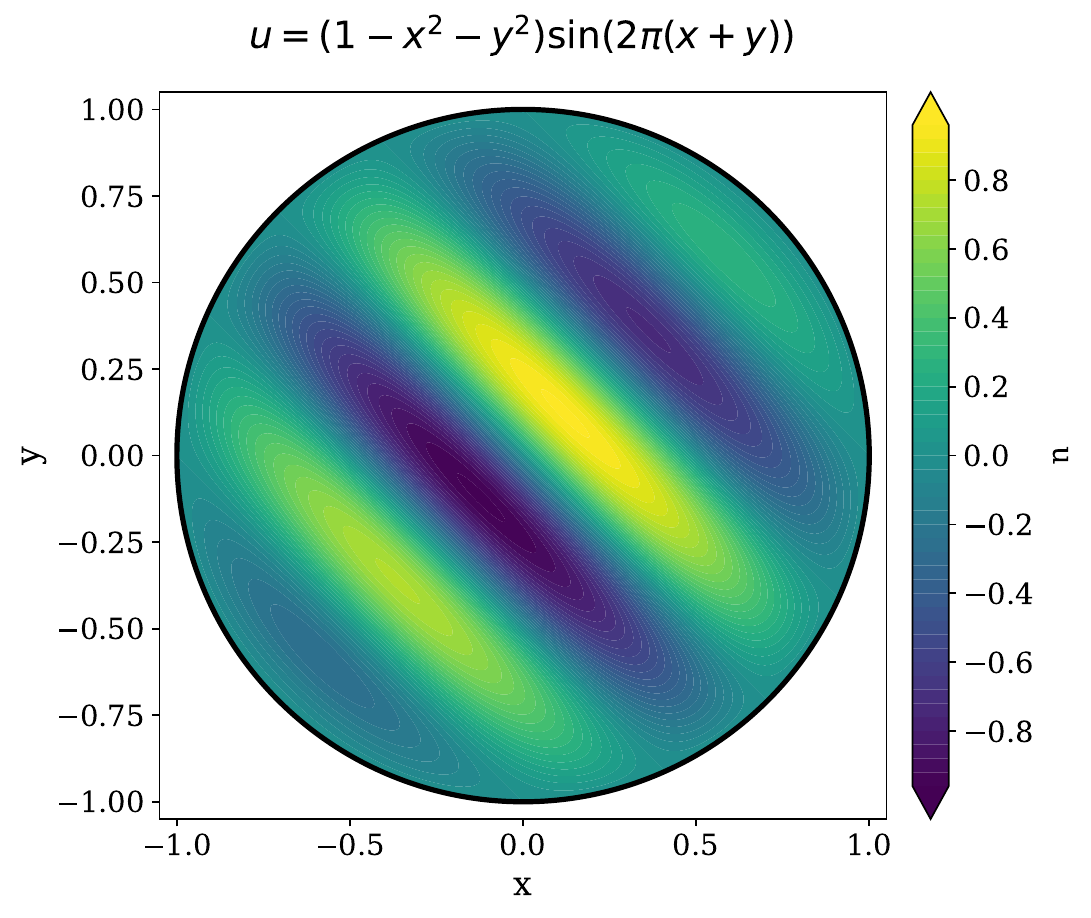}
	\includegraphics[width=0.48\textwidth]{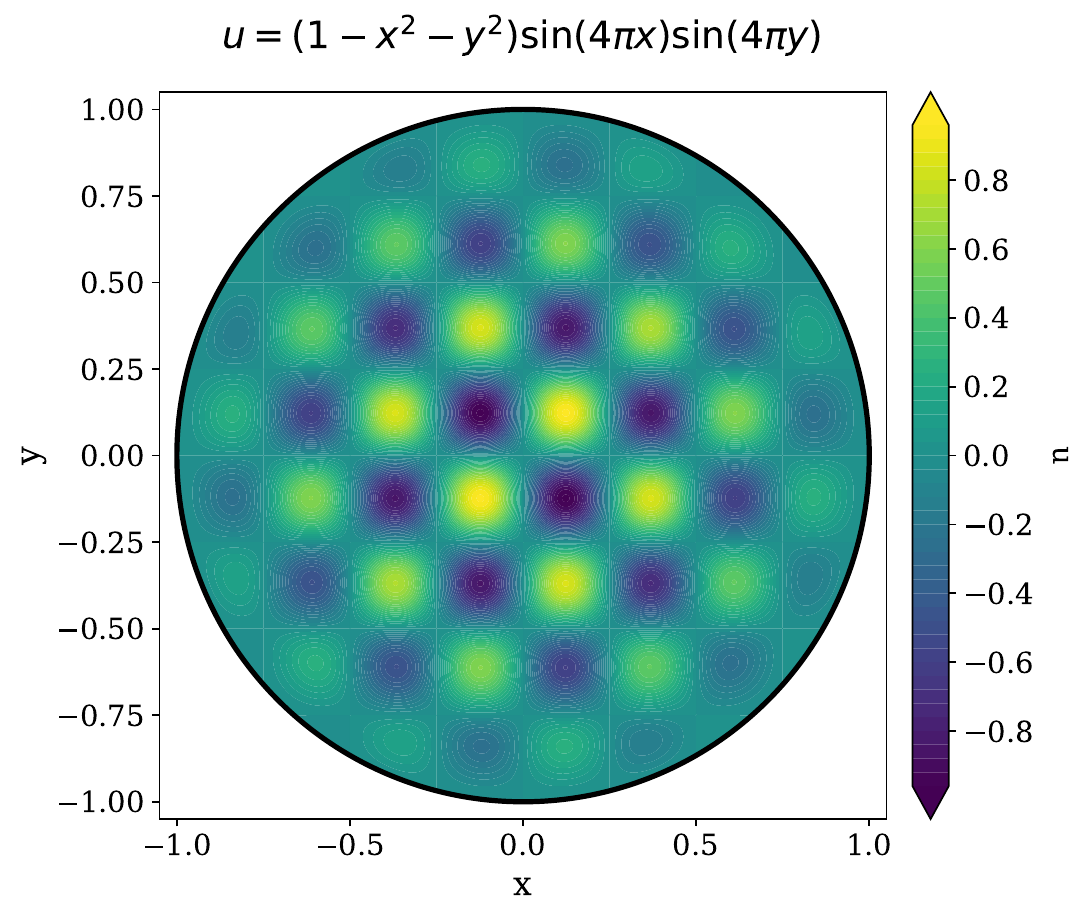}
	\caption{{\bf Left:} graph of the exact solution for Test 1. {\bf Right:} graph of the exact solution for Test 2. In comparison with Test 1, the solution for Test 2 exhibits higher oscillatory behavior.}
	\label{fig:u-1-2}
\end{figure}
\subsection{Test 1: smooth manufactured solution on the unit disk}\label{subsec:test1}

We choose the exact solution
\[
u(x, y) = (1 - x^2 - y^2) \sin(2\pi(x + y)),
\quad u|_{\partial\Omega}=0,
\]
with $f=-\Delta u$ computed analytically, see the graph of $u$ in the left of Figure \ref{fig:u-1-2}. Numerical results are given in Table \ref{tab:test1_k1} and Table \ref{tab:test1_k2}.

\begin{table}[h!]\centering
	\caption{Test 1 -- Convergence Rate for $P_1$ Elements}
	\label{tab:test1_k1}
	\begin{tabular}{c|c|c|c|c}
		\hline
		$H$ & $|||\,\Pi_hu-u_h\,|||$ & rate & $\|u-u_h^*\|_{L^2(\Omega)}$ & rate \\ \hline
$1/10 $ & 6.8137e-01      & -    & 6.5392e-02      & - \\ \hline
$1/20 $ & 2.5475e-01      & 1.42 & 1.6092e-02      & 2.02 \\ \hline
$1/40 $ & 9.6410e-02      & 1.40 & 3.8834e-03      & 2.05 \\ \hline
$1/80 $ & 3.5411e-02      & 1.45 & 9.4701e-04      & 2.04 \\ \hline
$1/160$ & 1.2741e-02      & 1.47 & 2.3242e-04      & 2.03 \\ \hline	 
	\end{tabular}
\end{table}
\begin{table}[h!]\centering
	\caption{Test 1 -- Convergence Rate for $P_2$ Elements}
	\label{tab:test1_k2}
	\begin{tabular}{c|c|c|c|c}
		\hline
$H$ & $|||\Pi_hu-u_h|||$ & rate & $\|u-u_h^*\|_{L^2(\Omega)}$ & rate \\ \hline
$1/10 $ & 2.2602e-01      & - & 8.9614e-03      & - \\ \hline
$1/20 $ & 4.1040e-02      & 2.46 & 8.5524e-04      & 3.39 \\ \hline
$1/40 $ & 7.2545e-03      & 2.50 & 8.3467e-05      & 3.36 \\ \hline
$1/80 $ & 1.2939e-03      & 2.49 & 8.6709e-06      & 3.27 \\ \hline
$1/160$ & 2.3504e-04      & 2.46 & 9.6004e-07      & 3.18 \\ \hline	 
	\end{tabular}
\end{table}

\subsection{Test 2: oscillatory manufactured solution}\label{subsec:test2}

To further challenge the discretization, we consider
\[
u(x, y) = (1 - x^2 - y^2) \sin(4\pi x)\sin(4\pi y),
\quad u|_{\partial\Omega}=0,
\]
with $f=-\Delta u$ computed analytically.
This test contains stronger oscillatory, see the right in Figure \ref{fig:u-1-2}. Numerical results are given in Table \ref{tab:test2_k1} and Table \ref{tab:test2_k2}.

\begin{table}[h!]\centering
	\caption{Test 2 -- Convergence Rate for $P_1$ Elements}
	\label{tab:test2_k1}
	\begin{tabular}{c|c|c|c|c}
	\hline
$H$ & $|||\Pi_hu-u_h|||$ & rate & $\|u-u_h^*\|_{L^2(\Omega)}$ & rate \\ \hline
$1/10 $ & 1.0826e+00      &  -  & 1.3547e-01      &  -  \\ \hline
$1/20 $ & 4.7025e-01      & 1.20 & 3.7598e-02      & 1.85 \\ \hline
$1/40 $ & 1.9260e-01      & 1.29 & 9.5820e-03      & 1.97 \\ \hline
$1/80 $ & 7.0569e-02      & 1.45 & 2.3803e-03      & 2.01 \\ \hline
$1/160$ & 2.5129e-02      & 1.49 & 5.8988e-04      & 2.01 \\ \hline
	\end{tabular}
\end{table}

\begin{table}[h!]\centering
	\caption{Test 2 -- Convergence Rate for $P_2$ Elements}
	\label{tab:test2_k2}
	\begin{tabular}{c|c|c|c|c}
		\hline
$H$ & $|||\Pi_hu-u_h|||$ & rate & $\|u-u_h^*\|_{L^2(\Omega)}$ & rate \\ \hline
$1/10 $ & 6.2742e-01      &  -  & 2.5734e-02      &  -  \\ \hline
$1/20 $ & 1.0211e-01      & 2.62 & 2.5141e-03      & 3.36 \\ \hline
$1/40 $ & 1.7490e-02      & 2.55 & 2.7360e-04      & 3.20 \\ \hline
$1/80 $ & 3.2549e-03      & 2.43 & 3.2231e-05      & 3.09 \\ \hline
$1/160$ & 6.4737e-04      & 2.33 & 3.9131e-06      & 3.04 \\ \hline
	\end{tabular}
\end{table}

\subsection{DoF reduction compared to a fully nonconforming method}\label{subsec:dof_reduction}

A key motivation for the coupled scheme is to avoid nonconforming unknowns in the interior.
Although static condensation via the Schur complement and polynomial reduction are available in the literature, all degrees of freedom are counted here for a fair comparison.
For $k=1$ the coupled method introduces, per WG element, $3$ interior coefficients for $v_0$ and per radial edge
$2$ trace dofs for $v_b$, while the interface dofs are shared with CG.
For $k=2$ the counts become $6$ interior coefficients per WG element and $3$ trace dofs per radial edge. For $k=3,4$, the dofs can also be calculated similarly. 
To compare the degrees of freedom for CG and the coupled CG-WG scheme, we only consider dofs of CG in the interior region, the results are given in Table \ref{tab:dof_compare_1}. 
To quantify savings, we report the ratio of total number of degrees of freedom for the coupled CG-WG scheme over dofs of full WG discretization on the same mesh, which is shown in Table \ref{tab:dof_compare_2}. 

\begin{table}[h!]\centering
	\caption{Ratio $\frac{\text{CG dofs}}{\text{WG-CG dofs}}$ for $P_k$ Elements}
	\label{tab:dof_compare_1}
	\begin{tabular}{c|c|c|c|c|c|c}
		\hline
$H$&CG Element     &WG Element   &$P_1$     &$P_2$     &$P_3$     &$P_4$      \\ \hline
$1/10 $ & 514        & 62         & 48.25\% & 66.16\% & 73.50\% & 77.36\% \\ \hline
$1/20 $ & 2275       & 125        & 65.77\% & 80.61\% & 85.63\% & 88.07\% \\ \hline
$1/40 $ & 9563       & 251        & 79.64\% & 89.56\% & 92.51\% & 93.88\% \\ \hline
$1/80 $ & 39220      & 502        & 88.78\% & 94.59\% & 96.19\% & 96.91\% \\ \hline
$1/160$ & 158851     & 1005       & 94.09\% & 97.24\% & 98.07\% & 98.45\% \\ \hline
	\end{tabular}
\end{table}

\begin{table}[h!]\centering
	\caption{Ratio $\frac{\text{WG-CG dofs}}{\text{Full-WG dofs}}$ for $P_k$ Elements}
	\label{tab:dof_compare_2}
	\begin{tabular}{c|c|c|c|c|c|c}
		\hline
$H$&CG Element     &WG Element   &$P_1$     &$P_2$     &$P_3$     &$P_4$      \\ \hline
$1/10 $ & 514        & 62         & 17.33\% & 27.27\% & 35.54\% & 42.26\% \\ \hline
$1/20 $ & 2275       & 125        & 12.68\% & 23.02\% & 31.71\% & 38.80\% \\ \hline
$1/40 $ & 9563       & 251        & 10.47\% & 21.00\% & 29.88\% & 37.15\% \\ \hline
$1/80 $ & 39220      & 502        & 9.39\% & 20.01\% & 28.99\% & 36.34\% \\ \hline
$1/160$ & 158851     & 1005       & 8.86\% & 19.53\% & 28.56\% & 35.95\% \\ \hline
	\end{tabular}
\end{table}

\subsection{Observation}\label{subsec:observation}

Across all tests, the numerical results in Tables
\ref{tab:test1_k1}--\ref{tab:test2_k2} confirm the theoretical convergence orders:
$\mathcal O(h^k)$ in the energy norm and $\mathcal O(h^{k+1})$ in $L^2(\Omega)$. The convergence rate for energy norm seems a bit higher constantly, though not proved here, we believe it is consistent with previous results, see \cite{wang2018supercloseness} for a super-convergence results of energy norm.
Moreover, Table~\ref{tab:dof_compare_1} shows a comparable dofs for CG and WG-CG methods when the mesh is fine enough.
Table~\ref{tab:dof_compare_2} highlights that restricting weak Galerkin unknowns to the boundary layer substantially reduces the total dofs relative to a fully nonconforming discretization, while maintaining optimal accuracy.

\section*{Declarations}

\subsection*{Conflict of interest}
The authors declare that they have no conflict of interest.

\subsection*{Data availability}
The data and code generated during the current study are available from the authors upon reasonable request.

\printbibliography

\end{document}